%%%%%%%%%%%%%%%%%%%%%%%%%%%%%%%%%%%%%%%%%%%%%%%%%%%%%%%%%%%%%%%%%%%%%%%%%%
% 
% PLEASE, PREPARE YOUR PAPER ACCORDING TO THIS FILE
% 
%%%%%%%%%% REQUIRED PACKAGES %%%%%%%%%%%%%%%%%%%%%%%%%%%%%%%%%%%%%%%%%%%%%
% The following packages are used in production. You can comment some of them, 
% if you do not have some of required files. Also, you can add additional 
% packages to handle pictures etc.

\documentclass[oneside,10pt]{article}          % please do not change
\usepackage[b5paper]{geometry}	    % your paper can be easily printed on a4 or letter paper with enlargenment      

\usepackage{graphicx}
\usepackage{enumerate}
\usepackage{epstopdf}
                                    % comment if you have problem with print     
\usepackage{amsfonts,amsmath,latexsym,amssymb} % these packages are required
\usepackage{amsthm}                % please use one of this two options for theorems
\usepackage{mathrsfs,upref}         % not so essential, but part of journal style
%\usepackage{mathptmx}		    % Journal is printed with poscript fonts: 
	                            % this package is essential for exact line and page breaks
		                    % comment if you do not have this package
\usepackage{mia}	            % journal style, comment only if you find some bug.

\usepackage[final]
{showkeys}

\usepackage{mathscinet}
\usepackage{calrsfs}
\usepackage{mathtools}

\usepackage{hyperref}

%%%%%%%%%% THEOREMS, LEMMAS, DEFINITIONS, ETC %%%%%%%%%%%%%%%%%%%%%%%%%%%%
% Theorem system is the standard one, you can use the one which is familiar to you
% The style of numbering of theorems is left to author(s)
% Change the following lines accordingly, but left definitions and remarks behind
% \theoremstyle{definition}:

\newtheorem{theorem}{Theorem}[section]           
\newtheorem{lemma}[theorem]{Lemma}               

\newtheorem{proposition}[theorem]{Proposition}

\theoremstyle{definition}

\newtheorem{remark}[theorem]{Remark}

\numberwithin{equation}{section}       

\newcommand{\opt}{{\operatorname{opt}}}

\newcommand{\si}{\sigma}

\newcommand{\ka}{\kappa}
\newcommand{\la}{\lambda}

\newcommand{\de}{\delta}

\renewcommand{\Psi}{\overline{\Phi}}

\newcommand{\BH}{{\operatorname{\mathsf{BH}}}}

\newcommand{\ii}[1]{\,\operatorname{I}\{#1\}} 
\renewcommand{\P}{\operatorname{\mathsf{P}}} 
\newcommand{\E}{\operatorname{\mathsf{E}}}
\newcommand{\Var}{\operatorname{\mathsf{Var}}}

\newcommand{\R}{\mathbb{R}}

\newcommand{\vp}{\varepsilon}
\newcommand{\tz}{{\tilde{z}}}
%\newcommand{\tC}{{\tilde{C}}}

%
%\newcommand{\M}{{\mathrm{M}}}
%\newcommand{\N}{{\mathrm{N}}}
%
%
%\newcommand{\bF}{{\overline{F}}}
%
%\renewcommand{\le}{\leqslant}
%\renewcommand{\ge}{\geqslant}
%\renewcommand{\leq}{\leqslant}
%\renewcommand{\geq}{\geqslant}
% \pagenumbering{arabic}
% 
%\newcommand{\No}{\operatorname{N}}
%\newcommand{\Po}{\operatorname{Pois}}
%
%\newcommand{\fl}[1]{\lfloor#1\rfloor}
%\newcommand{\ce}[1]{\lceil#1\rceil}
%
%\renewcommand{\Re}{\operatorname{\mathsf{Re}}}
%\renewcommand{\Re}{\operatorname{\mathfrak{Re}}}
%\renewcommand{\Im}{\operatorname{\mathfrak{Im}}}
%
%\newcommand{\X}{{\mathfrak{X}}}
%\newcommand{\W}{{\mathfrak{P}}}

%\newcommand{\widesim}[2][1.5]{
%  \mathrel{\overset{#2}{\scalebox{#1}[1]{$\sim$}}}
%}
%
%%%%%%% START OF THE PAPER %%%%%%%%%%%%%%%%%%%%%%%%%%%%%%%%%%%%%%%%%%%%

\begin{document}

\title[Exact converses to a reverse AM--GM inequality]{Exact converses to a reverse AM--GM inequality, with applications to sums of independent random variables and (super)martingales}

% Short title is optional, it will appear in running heads.
% It is necessary only if the title is to long to be used in running heads

\author{Iosif Pinelis}

\address{Department of Mathematical Sciences\\
Michigan Technological University\\
Houghton, Michigan 49931, USA\\
\email{ipinelis@mtu.edu}}

%\address{Second Author, Full postal Address of the Second Author,\\
%\email{email of the Second Author}}
%
%\address{Third Author, Full postal Address of the Third Author\\
%\email{email of the Third Author}}

\CorrespondingAuthor{Iosif Pinelis}

%\dedicated{Dedicated to...}                    % Optional

\date{\today}                               % Please, write the date of submission

\keywords{arithmetic mean; geometric mean; random variables; inequalities; exact bounds; Jensen inequality; reverse Jensen inequality; converse to a reverse Jensen inequality; duality; Markov inequality; Bernstein--Chernoff inequality; Bennett--Hoeffding inequality 
}

\subjclass{26D15, 60E15}
        % AMS-2010 subj class. The list can be found on http://www.ams.org/mathscinet/msc/msc2010.html

%26-XX			Real functions [See also 54C30] 
%26D15  	Inequalities for sums, series and integrals       
%        26D07  	Inequalities involving other types of functions
%60E15  	Inequalities; stochastic orderings        

%\thanks{Supported in part by NSF grant DMS-0805946 and NSA grant H98230-12-1-0237} 
        % Optional. Only one command thanks is allowed, use \par inside text if you need multiple thanks.       

\begin{abstract}
For every given real value of the ratio $\mu:=A_X/G_X>1$ of the arithmetic and geometric means of a positive random variable $X$ and every real $v>0$, exact upper bounds on the right- and left-tail probabilities $\P(X/G_X\ge v)$ and $\P(X/G_X\le v)$ are obtained, in terms of $\mu$ and $v$. In particular, these bounds imply that $X/G_X\to1$ in probability as $A_X/G_X\downarrow1$. 
Such a result may be viewed as a converse to a reverse Jensen inequality for the strictly concave function $f=\ln$, whereas the well-known Cantelli and Chebyshev inequalities may be viewed as converses to a reverse Jensen inequality for the strictly concave quadratic function $f(x)\equiv -x^2$. As applications of the mentioned new results, improvements of the Markov, Bernstein--Chernoff, sub-Gaussian, and Bennett--Hoeffding probability inequalities are given. 
%to sums of independent random variables and (super)martingales
%
%\\
%!!!!add more to this 
\end{abstract}

\maketitle

%%%%% END OF TITLE PAGE %%%%%%%%%%%%%%%%%%%%%%%%%%%%%%%%%%%%%%%%%%%%%%

%%%%% BODY OF THE PAPER %%%%%%%%%%%%%%%%%%%%%%%%%%%%%%%%%%%%%%%%%%%%%%
% You should eventually delete (after reading) the rest of the text below %% 

\section{Introduction}\label{intro}
%https://mathoverflow.net/questions/377913/probability-of-a-deviation-when-jensen-s-inequality-is-almost-tight

Let $X$ be a positive random variable (r.v.). One can define the arithmetic and geometric means of $X$ as follows: 
\begin{equation}\label{eq:AX,GX}
	A_X:=\E X\quad\text{and}\quad G_X:=\exp\E\ln X,
\end{equation}
assuming that $\E X$ and $\E\ln X$ exist and are finite. 
% (which will always be the case as long as $\E X<\infty$). 

Consider the special case when, for given positive real numbers $x_1,\dots,x_n$, the distribution of the r.v.\ $X$ is defined by the formula 
\begin{equation}\label{eq:discr}
	\E f(X)=\frac1n\,\sum_{i=1}^n f(x_i)\quad\text{for any function $f\colon\R\to\R$.}
\end{equation}
\big(So, in the case when the numbers $x_1,\dots,x_n$ are pairwise distinct, any such r.v.\ $X$ takes each of the values $x_1,\dots,x_n$ with probability $\frac1n$.\big) 
In this case, 
\begin{equation}\label{eq:a-g}
A_X=\E X=%\ol x:=
\frac{x_1+\dots+x_n}n\quad\text{and}\quad 
G_X=\exp\E\ln X= %x^\g:=
\sqrt[%\leftroot{3}\uproot{3}
^n]{x_1\cdots x_n}. 
\end{equation}
Thus, the definitions \eqref{eq:AX,GX} of the arithmetic and
geometric means of a r.v.\ $X$ generalize the usual definitions of the arithmetic and geometric means of finitely many positive real numbers.

Since any bounded positive r.v.\ can be approximated in distribution by uniformly bounded r.v.'s each taking finitely many positive real values with equal probabilities, 
%such as the r.v.\ $X$ described in the beginning of this paragraph, 
the exact bounds to be stated in Theorem~\ref{th:} will remain exact in an appropriate sense if one considers only the r.v.'s with such discrete uniform distributions. 
%In particular, one has the following immediate corollary from Theorem~\ref{th:} and Remark~\ref{rem:le}.

The arithmetic mean--geometric mean (AM--GM) inequality 
\begin{equation}\label{eq:am-gm-ineq}
	A_X\ge G_X
\end{equation}
is a special case (with $f=\ln$) of Jensen's inequality 
\begin{equation}\label{eq:jensen}
	f(\E X)\ge\E f(X) 
\end{equation}
for concave functions $f$. 

Clearly, if the r.v.\ $X$ is constant almost surely (a.s.) -- that is, if $\P(X=c)=1$ for some real $c>0$, then the Jensen inequality \eqref{eq:jensen} and, in particular, the AM--GM inequality \eqref{eq:am-gm-ineq} turn into the equalities. Therefore, one may expect that, if the r.v.\ $X$ is close to a constant in some sense, then both sides of the Jensen inequality will be close to each other and, in particular, the arithmetic and geometric means of the r.v.\ $X$ will be close to each other. 

There are indeed a large number of theorems in this vein, called \emph{reverse Jensen inequalities}; see e.g.~\cite{%jebara-pentland
budimir-etal}. Usually, in such theorems the condition of $X$ being close to a constant is that the values of $X$ are in a bounded interval $[m_X,M_X]$, which latter may be thought of as small, with the conclusion that the difference $f(\E X)-\E f(X)$ between the left- and right-hand sides of the Jensen inequality \eqref{eq:jensen} is 
%bounded by a bound, which is 
small if the interval $[m_X,M_X]$ is small. %; see e.g.\ \cite{simic} and references therein. 
Somewhat related results were obtained in \cite{arithm-geom_publ}.  

%In distinction from such reverse Jensen inequalities for arbitrary concave functions $f$, in the special case of $f=\ln$ an exact upper bound on the difference $A_X-G_X$ between the arithmetic and geometric means of $X$ was obtained in \cite{arithm-geom_publ}, which may be small even when $M_X$ is large (or infinite). \big(An exact lower bound on $A_X-G_X$ was also given in \cite{arithm-geom_publ}, which thus provides a refinement of the AM--GM inequality.\big)

%the inequality \eqref{eq:jensen} and, in particular, AM--GM inequality \eqref{eq:am-gm-ineq}

Note further that, if the function $f$ is strictly concave, then the equality in \eqref{eq:jensen} implies that the r.v.\ $X$ is a.s.\ constant. Therefore, it appears natural to inquire whether statements of the following form hold: If the two sides of the Jensen inequality \eqref{eq:jensen} with a strictly concave function $f$ are close to each other, then the r.v.\ $X$ is close to a constant in some sense. Such a statement may be referred to as a \emph{converse to a reverse Jensen inequality}. 
% or simply as a \emph{converse reverse Jensen inequality}. 

Converses to reverse Jensen inequalities are very well known and very widely used in the case when $f(x)\equiv -x^2$. Then the difference between the left- and right-hand sides of \eqref{eq:jensen} is $\si^2:=\Var X$, the variance of $X$. In this case, one has Cantelli's inequality 
\begin{equation}\label{eq:cant}
	\P(X-\mu\ge\vp)\vee\P(X-\mu\le-\vp)\le\frac{\si^2}{\si^2+\vp^2}
\end{equation}
and 
Chebyshev's inequality 
\begin{equation}\label{eq:cheb}
	\P(|X-\mu|\ge\vp)\le\frac{\si^2}{\vp^2}
\end{equation}
for all real $\vp>0$, where $\mu:=\E X\in\R$ and $a\vee b:=\max(a,b)$. The Cantelli and Chebyshev bounds are exact in their terms. In particular, \eqref{eq:cant} turns into the equality when $\P(X=\mu+\vp)=\vp^2/(\si^2+\vp^2)=1-\P(X=\mu-\si^2/\vp)$ or when $\P(X=\mu-\vp)=\vp^2/(\si^2+\vp^2)=1-\P(X=\mu+\si^2/\vp)$, whereas \eqref{eq:cheb} turns into the equality when $\P(X=\mu+\vp)=\P(X=\mu-\vp)=1/2$. 

So, for any given real $\vp>0$, if $f(x)\equiv -x^2$ and the difference $\si^2$ between the left- and right-hand sides of \eqref{eq:jensen} is small enough, then $X$ deviates from the constant $\mu$ with a however small probability. Thus, the Cantelli and Chebyshev inequalities are indeed converses to a reverse Jensen inequality for $f(x)\equiv -x^2$. 

In this paper, we shall provide converses to reverse Jensen inequalities for $f=\ln$, that is, converses to reverse AM--GM inequalities. This case appears to be the next in importance after the Chebyshev--Cantelli ``quadratic'' case of $f(x)\equiv-x^2$ -- see the applications to %Markov's inequality and to 
the so-called exponential bounds on the tails of the distributions of sums of independent r.v.'s in Section~\ref{exp-bounds}; here one may also note  
e.g.\ \cite[Lemma~3.9]{buckley}. Just as the Cantelli and Chebyshev bounds, our bounds are exact in their own terms. However, the case of $f=\ln$ is much more difficult than that of $f(x)\equiv-x^2$. 

\section{Basic results and discussion
}\label{results}

The main result of this paper is as follows. 

\begin{theorem}\label{th:}
Let $X$ be a positive r.v.\ with finite $\E X$ and $\E\ln X$. Suppose that $\P(X=c)<1$ for each real $c$, so that  
\begin{equation}\label{eq:EX,ElnX}
	\mu:=%\E X\in(1,\infty)\quad\text{and}\quad\E\ln X=0. 
	\frac{A_X}{G_X}>1. 
\end{equation}
Then 
\begin{enumerate}[(I)]
	\item \label{I}
\begin{equation}\label{eq:right}
	\P\Big(\frac X{G_X}\ge v\Big)\le p_{\mu,v}:=p_v:=\frac{\mu-z_v}{v-z_v}\in(0,1) \quad\text{for each}\quad v\in(\mu,\infty)
\end{equation}
and 
\begin{equation}\label{eq:left}
	\P\Big(\frac X{G_X}\le v\Big)\le p_v\in(0,1) \quad\text{for each}\quad v\in(0,1),
\end{equation}
where, for each $v\in(\mu,\infty)$, $z_v=z_{\mu,v}$ is the only root $z\in(0,1)$ of the equation 
\begin{equation}\label{eq:F}
	F(z):=F_{\mu,v}(z):=(v-\mu)\ln z+(\mu-z)\ln v=0
\end{equation}
and, for each $v\in(0,1)$, $z_v=z_{\mu,v}$ is the only root $z\in(\mu,\infty)$ of equation \eqref{eq:F}. 

%!!! $z_v$ in terms of Lambert's $W$ function
\item \label{II} For each $v\in(\mu,\infty)$ and for each $v\in(0,1)$, the upper bound $p_v$ in the corresponding inequalities in \eqref{eq:right} and \eqref{eq:left} is exact, as it is attained when 
\begin{equation}\label{eq:X_v}
	\P(X=v)=p_v=1-\P(X=z_v),
\end{equation}
and for such a r.v.\ $X$ the condition $A_X/G_X=\mu$ holds -- cf.\ \eqref{eq:EX,ElnX}. 
%obviously holds. 
\item \label{III} We have 
\begin{equation}\label{eq:rho=1}
	\rho_\mu(v):=\sup_{A_X/G_X=\mu}\P\Big(\frac X{G_X}\ge v\Big)=1
	\quad\text{for each}\quad v\in(-\infty,\mu]
\end{equation}
and 
\begin{equation}\label{eq:la=1}
	\la_\mu(v):=\sup_{A_X/G_X=\mu}\P\Big(\frac X{G_X}\le v\Big)=1
	\quad\text{for each}\quad v\in[1,\infty),
\end{equation}
where $\sup%\limits
_{A_X/G_X=\mu}$ denotes the supremum over all positive r.v.'s $X$ with finite $\E X$ and $\E\ln X$ and with $A_X/G_X=\mu$. 
In particular, for each $v\in[1,\mu]$, the exact upper bound on either one of the two tail probabilities, $\P\big(\frac X{G_X}\ge v\big)$ and $\P\big(\frac X{G_X}\le v\big)$, is $1$; it is not attained, though.  
%??? what about bound on $\P\big(\frac X{G_X}\ge v\big)$ for $v\in[1,\mu)$ ?? \\ 
\item \label{IV} One also has the following simple (but not exact) upper bounds on $\P\big(\frac X{G_X}\ge v\big)$ and $\P\big(\frac X{G_X}\le v\big)$: 
\begin{equation}\label{eq:right,q}
	\P\Big(\frac X{G_X}\ge v\Big)\le q_{\mu,v}:=q_v:=\min\Big(1,\frac{\mu-1}{v-1-\ln v}\Big) \quad\text{for each}\quad v\in[1,\infty)
\end{equation}
(with $q_1:=1$) and 
\begin{equation}\label{eq:left,q}
	\P\Big(\frac X{G_X}\le v\Big)\le q_v 
	%\frac{\mu\ln\mu}{v-\mu+\mu\ln(\mu/v)}
	%\underset{\mu\downarrow1}\widesim
	%\frac{\mu-1}{v-1-\ln v} 
	\quad\text{for each}\quad v\in(0,\mu]. 
\end{equation}
%\reverse-reverse-jensen\Mathematica\Untitled-17.nb
\item \label{V} 
%\begin{remark}\label{rem:le mu}
The condition $\mu:=A_X/G_X$ in %Theorem~\ref{th:}
\eqref{eq:EX,ElnX} can be replaced by the $A_X/G_X\le\mu$. 
%This follows from the proof of Theorem~\ref{th:} (in Section~\ref{proofs}). More specifically, given only the condition $A_X/G_X\le\mu$ (which means that $\E X\le\mu$ when \eqref{eq:G_X=1} is assumed), the second equality sign in \eqref{eq:P(X>v)<R} can be replaced by $\le$, since $a>0$. So, the inequality $\P(X\ge v)\le R_z(v)$ will continue to hold when $0<z<v$. Similarly, \eqref{eq:P(X<v)<R} will continue to hold. %, under the assumption $A_X/G_X\le\mu$. 
%\end{remark}
\end{enumerate}
\end{theorem}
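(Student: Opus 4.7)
\medskip
\noindent\emph{Proof plan.} The plan is to normalize by setting $Y := X/G_X$, reducing the hypothesis to $\E Y = \mu$ and $\E\ln Y = 0$, and then to exploit duality via dominating functions of the form $g(y) = a + by + c\ln y$, whose expectation against any admissible $Y$ is simply $a+b\mu$. This single device handles parts~(I), (IV), and~(V) uniformly.

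For the right-tail bound in part~(I), I would pick $(a,b,c)$ so that $g$ majorizes the indicator $\ii{y\ge v}$ on $(0,\infty)$, with contact at $y=z_v$ (a tangency to $0$ inside the gap $(0,v)$) and at $y=v$ (a jump up to $1$). The three equations $g(z_v)=0$, $g'(z_v)=0$, $g(v)=1$ determine $(a,b,c)$ uniquely and force $c<0$ (so $g$ is convex on $(0,\infty)$, hence $z_v$ is a genuine global minimum) together with $b=-c/z_v>0$; a direct calculation using the defining relation $F_{\mu,v}(z_v)=0$ then simplifies $a+b\mu$ to $p_v=(\mu-z_v)/(v-z_v)$. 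The left-tail case $v\in(0,1)$ is treated identically, with the minimum $z_v$ of $g$ now lying to the right of $v$ in $(\mu,\infty)$. Existence and uniqueness of $z_v$ follow from the strict concavity of $F_{\mu,v}$ in $z$ when $v>\mu$ (respectively strict convexity when $v<\mu$), combined with the identity $F_{\mu,v}(v)=0$ and sign information at $z=1$ and at the appropriate boundary ($z\down 0$ or $z\up\infty$). Part~(V) then drops out for free, since $b>0$ makes $a+b\E Y$ monotone in $\E Y$, so relaxing $\E Y=\mu$ to $\E Y\le\mu$ only weakens the upper bound.

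Part~(II) would be verified by direct substitution: a two-point r.v.\ on $\{z,v\}$ with $\P(Y=v)=p$ satisfies $\E Y=\mu$ and $\E\ln Y=0$ precisely when $p=(\mu-z)/(v-z)$ and $F_{\mu,v}(z)=0$, pinning down $p=p_v$ and $z=z_v$. For part~(III) with $v\le\mu$, I would construct approximating two-point families placing mass $\ep$ on $a_\ep\down 0$ tuned so that $\ep\ln a_\ep\to-\ln\mu$ and mass $1-\ep$ on $b_\ep\to\mu$; this preserves the two moment constraints in the limit while forcing $\P(Y_\ep\ge v)=1-\ep\to1$, and the case $v\ge1$ is handled symmetrically by placing a small mass on a large value of order $(\mu-1)/\ep$ with the remaining mass near $1$. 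Part~(IV) uses the much simpler majorant $g(y)=\alpha(y-1-\ln y)$, nonnegative on $(0,\infty)$: monotonicity of $y-1-\ln y$ on $[1,\infty)$ (respectively on $(0,1]$) makes the choice $\alpha=1/(v-1-\ln v)$ produce the right-tail bound for $v\ge1$ (respectively the left-tail bound for $v\le1$), while on the residual ranges in \eqref{eq:right,q} and \eqref{eq:left,q} the bound $q_v=1$ is immediate.

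The main obstacle I anticipate is the algebraic verification in part~(I) that $a+b\mu$ collapses to exactly $p_v$: this requires substituting the implicit relation $F_{\mu,v}(z_v)=0$ to eliminate $\ln z_v$ and extracting a common, not immediately obvious, factor from numerator and denominator; one must also independently confirm the sign $c<0$ so that the candidate $g$ is genuinely a convex majorant of the indicator across all of $(0,\infty)$ and not merely at the two designated contact points.
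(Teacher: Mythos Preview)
Your proposal is correct and follows essentially the same route as the paper. Both arguments normalize to $G_X=1$, use a convex majorant of the form $a+by+c\ln y$ determined by the three contact conditions $g(z_v)=g'(z_v)=0$, $g(v)=1$, and then exploit $b>0$ for part~(V); the paper obtains part~(IV) by specializing the general bound $R_z(v)$ to $z=1$, which yields exactly your majorant $\alpha(y-1-\ln y)$. The only cosmetic difference is in part~(III): the paper reuses the optimal two-point families from part~(II) and lets $v\downarrow\mu$ (resp.\ $v\uparrow1$) to force $p_v\to1$, whereas you build ad hoc two-point families directly; both work, though the paper's route is slightly more economical since the families are already in hand.
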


\begin{remark}\label{rem:2-set}
Part~\eqref{II} of Theorem~\ref{th:} shows that, 
as in the cases of the Cantelli and Chebyshev inequalities, the ``extreme'' r.v.'s $X$ providing the attainment in our inequalities \eqref{eq:right} and \eqref{eq:left} take only two values.  
\end{remark}

\begin{remark}\label{rem:concentr}
Inequalities \eqref{eq:right,q} and \eqref{eq:left,q} imply concentration of the r.v.\ $X$ near its (say) geometric mean $G_X$ when the arithmetic mean $A_X$ is close to $G_X$. More precisely, we have  $X/G_X\to1$ in probability as $\mu=A_X/G_X\downarrow1$. 
Thus, Theorem~\ref{th:} indeed provides converses to the reverse Jensen inequality for $f=\ln$. 
\end{remark}
 
Remark~\ref{rem:concentr} is illustrated in Figure~\ref{fig:pic}. 
\begin{figure}[h]
	\centering
		\includegraphics[width=1.00\textwidth]{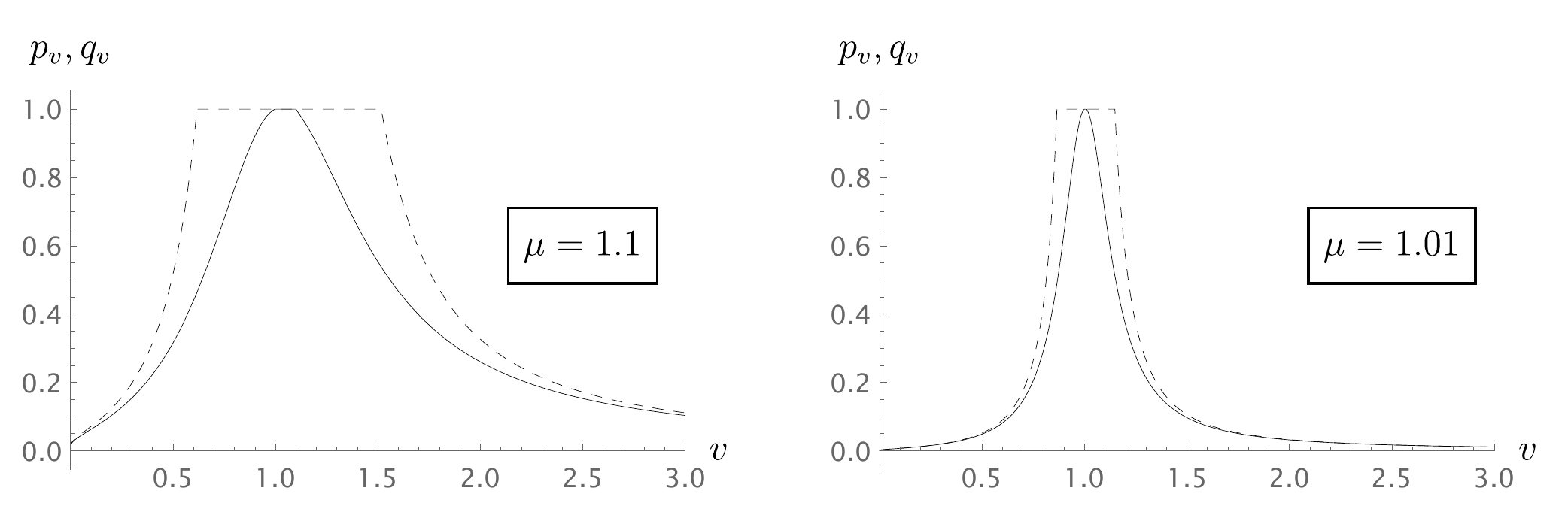}
	\caption{Graphs $\{(v,p_v)\colon0<v<3\}$ (solid) and $\{(v,q_v)\colon0<v<3\}$ (dashed) for $\mu=1.1$ (left) and $\mu=1.01$ (right), with $p_v:=1$ for $v\in[1,\mu]$.}
	\label{fig:pic}
\end{figure}

At least in the case when the distribution of the r.v.\ $X/G_X$ is highly concentrated (that is, when $\mu$ is close to $1$),  
the simple bound $q_v$ on the tails of the distribution of the r.v.\ $X/G_X$ is not too far from the exact bound $p_v$ when $v$ is somewhat close to $1$ but $q_v<1$ (so that $v$ is not too close to $1$). More precisely, we have the following proposition: 

\begin{proposition}\label{prop:p,q}
Suppose that $\mu$ and $v$ both go to $1$ in any way such that $v\in(\mu,\infty)\cup(0,1)$ and 
$q_v$ is less than $1$. 
% but bounded away from $0$. 
Then 
\begin{equation}\label{eq:p,q}
	p_v-\frac{q_v}{1+q_v}\to0. 
\end{equation}
\end{proposition}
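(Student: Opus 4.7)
The plan is to exploit the extremal characterization from Part~(II) of Theorem~\ref{th:} to derive a transcendental relation satisfied by $p_v$, and then Taylor-expand it in $\gamma:=\ln v$ for direct comparison with $q_v$. From $F(z_v)=0$, rearrangement gives $\ln z_v/\ln v=-(\mu-z_v)/(v-\mu)$. Setting $r:=p_v/(1-p_v)$, the identity $p_v=(\mu-z_v)/(v-z_v)$ simplifies to $r=(\mu-z_v)/(v-\mu)$ and hence $z_v=v^{-r}$. Substituting $z_v=v^{-r}$ into the barycenter identity $\mu=p_v\,v+(1-p_v)z_v$ and writing $\epsilon:=\mu-1$ yields the master relation
\[
(1+r)\,\epsilon \;=\; r(e^\gamma-1) + e^{-r\gamma}-1.
\]

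I would then Taylor-expand the right-hand side, noting the crucial cancellation of the linear-in-$\gamma$ terms:
\[
r(e^\gamma-1) + e^{-r\gamma}-1 \;=\; r(1+r)\Bigl[\tfrac{\gamma^2}{2}+(1-r)\tfrac{\gamma^3}{6}+O\bigl((1+r^2)\gamma^4\bigr)\Bigr].
\]
Dividing by $(1+r)\alpha$ with $\alpha:=v-1-\ln v=\gamma^2/2+\gamma^3/6+O(\gamma^4)$ and using $q_v=\epsilon/\alpha$, the master relation becomes
\[
q_v \;=\; r\,\frac{1+(1-r)\gamma/3+O(\gamma^2)}{1+\gamma/3+O(\gamma^2)} \;=\; r-\tfrac{r^2\gamma}{3}+O\bigl((1+r^3)\gamma^2\bigr).
\]
For this expansion to imply $r-q_v\to 0$, I need $r$ to stay bounded along the limiting procedure. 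Letting $\Phi(r,\gamma)$ denote the right-hand side of the master relation divided by $(1+r)\alpha$, one computes $\partial_r\Phi=[e^\gamma-e^{-r\gamma}(1+(1+r)\gamma)]/[(1+r)^2\alpha]$; the numerator equals $\alpha$ at $r=0$ and has $r$-derivative $\gamma^2(1+r)e^{-r\gamma}\ge 0$, so $\partial_r\Phi>0$. A direct expansion gives $\Phi(2,\gamma)=2-4\gamma/3+O(\gamma^2)\to 2$, hence $\Phi(2,\gamma)>1>q_v$ for $|\gamma|$ small, which forces $r<2$ by monotonicity.

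With $r$ bounded, the expansion now yields $|r-q_v|=O(|\gamma|)\to 0$, and the identity
\[
p_v-\frac{q_v}{1+q_v} \;=\; \frac{r-q_v}{(1+r)(1+q_v)}
\]
together with $(1+r)(1+q_v)\ge 1$ gives \eqref{eq:p,q}. The main technical obstacle is the Taylor bookkeeping: verifying the cancellation of the linear-in-$\gamma$ terms so as to extract the factor $r(1+r)$, which is precisely what makes the $\alpha\sim\gamma^2/2$ denominator cancel and produce the clean leading behavior $q_v\approx r$; both cases $v>1$ and $v<1$ are treated uniformly since only $\gamma=\ln v\to 0$ enters the analysis.
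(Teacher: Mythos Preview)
Your proof is correct, and the route differs from the paper's in a useful way. The paper works directly with the shifts $\de_\mu:=\mu-1$, $\de_v:=v-1$, $\de_z:=z_v-1$, Taylor-expands the equation $F(z_v)=0$ to second order, and after some manipulation arrives at an approximate quadratic $\de_z^2-(1-q)(1+o(1))\de_v\de_z-q(1+o(1))\de_v^2=0$, which it solves to obtain $\de_z/\de_v=-q+o(1)$; substitution into $p=(\de_\mu-\de_z)/(\de_v-\de_z)$ then yields the conclusion. Along the way the paper needs a separate (somewhat ad hoc) argument to show $\de_z\to0$.

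Your parametrization is cleaner: the identity $z_v=v^{-r}$ with $r=p_v/(1-p_v)$ collapses the two constraints $F(z_v)=0$ and $\mu=p_v v+(1-p_v)z_v$ into the single master relation $(1+r)\epsilon=r(e^\gamma-1)+e^{-r\gamma}-1$ in the variables $(\gamma,\epsilon,r)$, and the factor $r(1+r)$ you extract from its expansion is exactly what is needed to match the denominator $\alpha\sim\gamma^2/2$ defining $q_v$. Your monotonicity argument for $\Phi(\cdot,\gamma)$, combined with $\Phi(2,\gamma)\to2>1>q_v$, gives a transparent a priori bound $r<2$ that replaces the paper's argument that $\de_z\to0$ (indeed, $z_v=v^{-r}\to1$ then follows for free). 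The final algebraic identity $p_v-q_v/(1+q_v)=(r-q_v)/[(1+r)(1+q_v)]$ finishes things off neatly. Both proofs are Taylor-expansion based, but your choice of variables makes the structure more visible and handles the two ranges $v>\mu$ and $v<1$ uniformly through the single parameter $\gamma=\ln v$.
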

 
\begin{proposition}\label{prop:L}
For $v\in(\mu,\infty)\cup(0,1)$, one has the following expression for the root $z_v$ of equation~\eqref{eq:F}: 
\begin{equation}\label{eq:z=}
	z_v=z_{\mu,v}=-\tz_v W_{\ka(v)}\Big(-\frac{e^{-\mu/\tz_v}}{\tz_v}\Big),
\end{equation}
where 
\begin{equation}\label{eq:tz_v}
	\tz_v:=\frac{v-\mu}{\ln v},
\end{equation}
\begin{equation*}
	\ka(v):=\begin{cases}
	0 &\text{ if }v\in(\mu,\infty),\\
	-1 &\text{ if }v\in(0,1),
	\end{cases}
\end{equation*}
and $W_k$ is the $k$th branch of Lambert's $W$ function \cite{knuth96}, so that 
%reverse-reverse-jensen\Mathematica\W.nb
\begin{enumerate}[(i)]
	\item for all $t\in(-1,\infty)$ and $u\in(-1/e,\infty)$, we have $te^t=u\iff t=W_0(u)$; 
	\item for all $t\in(-\infty,-1)$ and $u\in(-1/e,0)$, we have $te^t=u\iff t=W_{-1}(u)$.  
\end{enumerate}
\end{proposition}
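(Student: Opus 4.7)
The plan is to reduce equation~\eqref{eq:F} to the standard form $te^t=u$ and then identify the correct branch of Lambert's $W$. Since $v\ne1$, dividing $F_{\mu,v}(z)=0$ by $\ln v$ and using \eqref{eq:tz_v} gives $z-\mu=\tz_v\ln z$; after exponentiating and multiplying through by $-(1/\tz_v)\,e^{-z/\tz_v}$ one obtains
\begin{equation*}
-\frac{z}{\tz_v}\,\exp\!\Bigl(-\frac{z}{\tz_v}\Bigr)=-\frac{e^{-\mu/\tz_v}}{\tz_v}.
\end{equation*}
Setting $t:=-z/\tz_v$, this is $te^t=u$ with $u:=-e^{-\mu/\tz_v}/\tz_v$, so applying the appropriate branch $W_k$ from properties (i)--(ii) yields $z=-\tz_v\,W_k(u)$, which is exactly \eqref{eq:z=}. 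It remains to show $k=\ka(v)$.

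For the branch selection I would study $g(z):=z-\tz_v\ln z-\mu$, whose positive zeros are exactly the positive roots of \eqref{eq:F}. In both cases $\tz_v>0$ (the numerator and denominator of $\tz_v$ share their sign), so $g'(z)=1-\tz_v/z$ vanishes only at $z=\tz_v$, where $g$ attains its unique minimum on $(0,\infty)$, while $g\to+\infty$ at both $0^+$ and $\infty$. Hence $g=0$ has at most two roots, and any two roots must lie on opposite sides of $\tz_v$. Since $v$ is clearly a root and Theorem~\ref{th:}\eqref{I} furnishes another root $z_v\ne v$, the points $v$ and $z_v$ lie on opposite sides of $\tz_v$.

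A short calculation with $h(v):=v\ln v-v+\mu$, using $h'(v)=\ln v$ and $h(1)=\mu-1>0$, shows $h(v)>0$ throughout $(\mu,\infty)\cup(0,1)$. Since $v-\tz_v=h(v)/\ln v$ shares the sign of $\ln v$, one has $v>\tz_v$ for $v>\mu$ and $v<\tz_v$ for $v\in(0,1)$. Consequently $t=-z_v/\tz_v\in(-1,0)$ in the first case, forcing $k=0=\ka(v)$ by property (i); and $t<-1$ in the second, forcing $k=-1=\ka(v)$ by property (ii). The only real subtlety is this branch identification; the Lambert $W$ reduction itself is routine algebra.
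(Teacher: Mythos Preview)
Your proof is correct, and the algebraic reduction to $te^t=u$ matches the paper's essentially line for line (the paper divides \eqref{eq:F} by $v-\mu$ rather than by $\ln v$, which is cosmetic).

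The branch identification, however, is genuinely different. The paper proceeds through three lemmas: Lemma~\ref{lem:1} establishes $\tz_v<v$ for $v>\mu$ and $\tz_v>v$ for $v\in(0,1)$ via the same function you call $h$; Lemma~\ref{lem:2} computes the sign of $F(\tz_v)$ by a monotonicity-in-$\mu$ argument followed by a separate calculus verification at $\mu=1$; and Lemma~\ref{lem:3} combines these with the concavity/convexity of $F$ (which switches sign with $v-\mu$) to place $\tz_v$ strictly between $v$ and $z_v$. Your route is shorter: by passing to $g(z)=z-\tz_v\ln z-\mu=-F(z)/\ln v$, you obtain a function that is strictly convex on $(0,\infty)$ in \emph{both} regimes (since $\tz_v>0$ always), so the two-root structure and the location of the minimum at $\tz_v$ are immediate, and the existence of the two distinct roots $v$ and $z_v$ forces them to opposite sides of $\tz_v$ without ever evaluating $F(\tz_v)$. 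This sidesteps Lemma~\ref{lem:2} entirely. The paper's approach is more granular and yields the intermediate fact $F(\tz_v)/(v-1)>0$ as a byproduct; yours is more economical for the proposition at hand.
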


%\begin{remark}\label{rem:le mu}
%The condition $A_X/G_X=\mu$ in Theorem~\ref{th:} can be replaced by the $A_X/G_X\le\mu$. This follows from the proof of Theorem~\ref{th:} (in Section~\ref{proofs}). More specifically, given only the condition $A_X/G_X\le\mu$ (which means that $\E X\le\mu$ when \eqref{eq:G_X=1} is assumed), the second equality sign in \eqref{eq:P(X>v)<R} can be replaced by $\le$, since $a>0$. So, the inequality $\P(X\ge v)\le R_z(v)$ will continue to hold when $0<z<v$. Similarly, \eqref{eq:P(X<v)<R} will continue to hold. %, under the assumption $A_X/G_X\le\mu$. 
%\end{remark}

\section{Applications: Improvements of Markov's bound and exponential bounds on the tails of the distributions of sums of independent r.v.'s and (super)martingales}\label{exp-bounds}

\subsection{Improvements of the Markov bound and of the Bernstein--Chernoff bound}\label{markov,Be-Ch}
%\begin{remark}\label{rem:markov}
By Markov's inequality, with $\mu$ as in Theorem~\ref{th:}, 
\begin{equation}\label{eq:markov}
	\P\Big(\frac X{G_X}\ge v\Big)\le\frac\mu v
\end{equation}
for all real $v>0$ (this inequality is nontrivial only if $v>\mu$). 

The bound $p_{\mu,v}=\dfrac{\mu-z_v}{v-z_v}$ in \eqref{eq:right} is a (best possible) improvement of the Markov bound $\dfrac\mu v$ in
\eqref{eq:markov} -- because $z_v(<1)<\mu<v$. 
%
%Inequality \eqref{eq:right} in Theorem~\ref{th:} is a (best possible) improvement of Markov's inequality \eqref{eq:markov} -- because $z_v(<1)<\mu<v$. 
Even though Markov's inequality is well-known (and easy to see) to be exact in its terms, the just mentioned improvement has been possible by taking into account that the geometric mean of the r.v.\ $X/G_X$ is $1$. 
This improvement over Markov's inequality may be dramatic in some cases. Indeed, when e.g.\ $\mu(>1)$ is close to $1$ while $v(>1)$ is not close to $1$, then even the suboptimal bound $%q_v
\dfrac{\mu-1}{v-1-\ln v}$ in \eqref{eq:right,q} will be much less than the Markov bound $\dfrac\mu v$. % in \eqref{eq:markov}. 
Similarly, inequality \eqref{eq:left} is a best possible, and in some settings dramatic, improvement of the corresponding left-tail Markov inequality.  

Take now any r.v.\ $Y$ with 
\begin{equation}\label{eq:EY=0}
	\E Y=0,
\end{equation}
any real number $y$, and any positive real number $\la$. 
The so-called Bernstein--Chernoff inequality
\begin{equation}\label{eq:Be-Che}
	\P(Y\ge y)\le \frac{\E e^{\la Y}}{e^{\la y}}
%	e^{-\la y}\E e^{\la Y},
\end{equation}
is a particular case of Markov's inequality \eqref{eq:markov}, with  
\begin{equation}\label{eq:X,v}
	X:=e^{\la Y}\quad\text{and}\quad v:=e^{\la y}.
\end{equation}
Also, the condition \eqref{eq:EY=0} implies that here
\begin{equation*}
	G_X=1. 
\end{equation*}
Actually, the Bernstein--Chernoff inequality \eqref{eq:Be-Che} is, not only a special case of Markov's inequality \eqref{eq:markov}, but of course also a restatement of \eqref{eq:markov}. In particular, just as Markov's inequality \eqref{eq:markov} does not take into account the fact that the geometric mean of $X/G_X$ is $1$, the Bernstein--Chernoff inequality
\eqref{eq:Be-Che} does not take condition \eqref{eq:EY=0} into account. 

Therefore, one can use Theorem~\ref{th:} to improve, not only Markov's inequality \eqref{eq:markov}, but also its equivalent, the Bernstein--Chernoff inequality \eqref{eq:Be-Che}. 

%Just as inequality \eqref{eq:right} in Theorem~\ref{th:} is a (best possible) improvement of Markov's inequality \eqref{eq:markov} 
%
%Markov's inequality immediately implies the so-called Bernstein--Chernoff inequality
%\begin{equation}\label{eq:Be-Che}
%	\P(Y\ge y)\le e^{-\la y}\E e^{\la Y},
%\end{equation}
%where $Y$ is any r.v., $y$ is any real number, and $\la$ is any positive real number. 

%Consider this thesis in more detail. 
When the r.v.\ $Y$ has an additional structure, 
one can obtain an upper bound $B(\la)$ on $\E e^{\la Y}$, and then $\inf_{\la\ge0}e^{-\la y}B(\la)$ will be an upper bound -- referred to as an exponential bound -- on the tail probability $\P(Y\ge y)$. A general approach to obtaining best possible exponential bounds of this kind, along with a number of specific results, in the case when $Y$ is the sum of independent r.v.'s was presented in \cite{pin-utev-exp}. 
Details on what has been said in this paragraph are provided in the following two subsections. 

\subsection{Improvements of the exponential bound in the sub-Gaussian case}\label{subG}
Suppose %indeed 
that 
\begin{equation}\label{eq:Y}
	Y=Y_1+\dots+Y_n,
\end{equation}
where $Y_1,\dots,Y_n$ are independent zero-mean r.v.'s. 
%the sum of independent zero-mean r.v.'s $Y_1,\dots,Y_n$. 

%Consider first 
In this subsection, we will consider the particularly simple case when the $Y_i$'s are sub-Gaussian, that is, when 
\begin{equation}\label{eq:subG}
	\E e^{\la Y_i}\le e^{\la^2\si_i^2/2}
\end{equation}
for some positive real numbers $\si_1,\dots,\si_n$, all $i\in[n]:=\{1,\dots,n\}$, and real $\la\ge0$. 
If $Y_i\sim N(0,\si_i^2)$ for all $i\in[n]$, then the sub-Gaussianity condition \eqref{eq:subG} holds with the equality sign. 
Also, for instance, \eqref{eq:subG} holds when $|Y_i|\le\si_i$ for all $i\in[n]$; cf.\ e.g.\ \cite[inequality~(4.16)]{hoeff63}. 

The constants $\si_i^2$ in \eqref{eq:subG} are referred to as (obviously, never unique) sub-Gaussian proxy variances of the corresponding r.v.'s $Y_i$. Clearly then, 
\begin{equation}\label{si^2}
	\si^2:=\si_1^2+\dots+\si_n^2
\end{equation}
is a sub-Gaussian proxy variance of the sum $Y$: 
\begin{equation}\label{eq:subG,Y}
	\E e^{\la Y}\le e^{\la^2\si^2/2}
\end{equation}
for all real $\la\ge0$. 

Take any real $y\ge0$. 
Then, by \eqref{eq:Be-Che}, 
\begin{equation}\label{eq:subG-bound}
	\P(Y\ge y)\le\inf_{\la\ge0}\frac{e^{\la^2\si^2/2}}{e^{\la y}}
	=\frac{e^{\la_y^2\si^2/2}}{e^{\la_y y}}
=P_1(t):=
	e^{-t^2/2}, 
\end{equation}
where
\begin{equation*}
	\la_y:=y/\si^2,\quad t:=y/\si,
\end{equation*}
and $\si:=\sqrt{\si^2}$. 

Using Theorem~\ref{th:}, %(and Remark~\ref{rem:le mu}), 
one can immediately improve the upper bound $e^{-t^2/2}$ on $\P(Y\ge y)$ in \eqref{eq:subG-bound}: % for all real $y>0$: 

\begin{proposition}\label{prop:subG}
For all real $y>0$, 
\begin{equation}\label{eq:P_2}
	\P(Y\ge y)\le %B_\opt(t):=
	P_2(t):=p_{\mu_t,v_t}=\frac{\mu_t-z_{\mu_t,v_t}}{v_t-z_{\mu_t,v_t}}
	<\frac{\mu_t}{v_t}=e^{-t^2/2}=P_1(t), 
\end{equation}
where 
\begin{equation}\label{eq:mu_t,v_t}
\mu_t:=e^{\la_y^2\si^2/2}=e^{y^2/(2\si^2)}=e^{t^2/2},\quad%\text{and}\quad
v_t:=e^{\la_y y}=e^{y^2/\si^2}=e^{t^2},   	
\end{equation}
and $z_{\mu,v}$ is as defined in part~\eqref{I} of 
Theorem~\ref{th:} or, equivalently, as in formula \eqref{eq:z=}. 
\end{proposition}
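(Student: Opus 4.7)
The plan is to reduce the bound to Theorem~\ref{th:} via the same exponential substitution that underlies the Bernstein--Chernoff bound, but now exploiting the constraint $\E Y=0$ to pin down the geometric mean of the tilted random variable.

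First I take the optimal tilt $\lambda_y := y/\sigma^2$ already identified in \eqref{eq:subG-bound} and set $X := e^{\lambda_y Y}$, so that $\{Y \ge y\} = \{X \ge v_t\}$ (since $\lambda_y>0$). The sub-Gaussian estimate \eqref{eq:subG,Y} gives
\begin{equation*}
A_X = \E e^{\lambda_y Y} \le e^{\lambda_y^2 \sigma^2/2} = e^{t^2/2} = \mu_t,
\end{equation*}
while \eqref{eq:EY=0} gives $G_X = \exp(\lambda_y\,\E Y) = 1$. Hence $A_X/G_X \le \mu_t$. For $y>0$, $\mu_t > 1$ and $v_t > \mu_t$ (the degenerate case $Y\equiv 0$ is trivial, since then $\P(Y\ge y)=0$).

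Second, I invoke part~\eqref{V} of Theorem~\ref{th:} to upgrade $A_X/G_X\le\mu_t$ to the hypothesis of part~\eqref{I}, and then apply the right-tail bound \eqref{eq:right} at $v=v_t$, obtaining
\begin{equation*}
\P(Y\ge y) = \P(X/G_X \ge v_t) \le p_{\mu_t,v_t} = P_2(t).
\end{equation*}

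Third, the strict inequality $P_2(t) < \mu_t/v_t = e^{-t^2/2} = P_1(t)$ is a one-line check: writing $z := z_{\mu_t,v_t}\in(0,1)$, I compute
\begin{equation*}
\frac{\mu_t}{v_t} - \frac{\mu_t-z}{v_t-z} = \frac{z\,(v_t-\mu_t)}{v_t\,(v_t-z)} > 0,
\end{equation*}
since $v_t>\mu_t$ and $v_t>z>0$. I see no real obstacle here; the only mildly subtle point is that part~\eqref{V} of Theorem~\ref{th:} is essential, since sub-Gaussianity delivers merely an upper bound on $A_X$ rather than the equality in the base hypothesis \eqref{eq:EX,ElnX}.
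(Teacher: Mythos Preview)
Your proof is correct and follows essentially the same route as the paper: the paper states the proposition as an immediate consequence of Theorem~\ref{th:} applied via the exponential substitution \eqref{eq:X,v}, and justifies the strict inequality $P_2(t)<P_1(t)$ by pointing back to the Markov-improvement remark (the second paragraph of Subsection~\ref{markov,Be-Ch}, namely that $z_v<1<\mu<v$). Your write-up is more explicit---in particular, you rightly flag that part~\eqref{V} is needed because sub-Gaussianity gives only $A_X\le\mu_t$ rather than equality, and you handle the degenerate case $Y\equiv0$ separately---but the underlying argument is the same.
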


Concerning the inequality in \eqref{eq:P_2}, recall the reasoning in the second paragraph of Subsection~\ref{markov,Be-Ch}.

So, the bound $P_2(t)$ on $\P(Y\ge y)$ in \eqref{eq:P_2} improves the bound $P_1(t)$ in \eqref{eq:subG-bound} for all real $t>0$ or, equivalently, for all real $y>0$. 
To get the bound $P_2(t)$, we borrowed the minimizer $\la_y$ of the bound $\dfrac{e^{\la^2\si^2/2}}{e^{\la y}}$ on $\P(Y\ge y)$ and used $\la_y$ in the definitions of $\mu_t$ and $v_t$ in \eqref{eq:mu_t,v_t}. While this choice of $\la$ is optimal for the Markov bound $\dfrac{e^{\la^2\si^2/2}}{e^{\la y}}$, it will not be optimal for the better bound of the form $p_{\mu,v}$ based on Theorem~\ref{th:}. 

So, we can improve the bound $P_2(t)$ on $\P(Y\ge y)$ -- and thus further improve the bound $P_1(t)$ -- by avoiding the mentioned borrowing, as follows: 

\begin{proposition}\label{prop:subG1}
For all real $y>0$, 
\begin{equation}\label{eq:P_opt}
	\P(Y\ge y)\le %B_\opt(t):=
	P_\opt(y):=P_\opt(\si,y):=\inf_{\la>0}p_{\mu_\la,v_y(\la)}, 
\end{equation}
where 
\begin{equation}\label{eq:mu_la,v_la}
\mu_\la:=e^{\la^2\si^2/2}\quad\text{and}\quad
v_y(\la):=e^{\la y}.   	
\end{equation}
\end{proposition}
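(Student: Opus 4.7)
The plan is to apply the same bootstrap from Markov to Theorem~\ref{th:} that was used in Proposition~\ref{prop:subG}, but without committing in advance to the Chernoff-optimal value $\la_y=y/\si^2$; instead I keep $\la>0$ free and absorb the minimization into the final infimum.

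First I would fix any $\la>0$ and set $X_\la:=e^{\la Y}$. Since $\E Y=0$ by \eqref{eq:EY=0}, we have $\E\ln X_\la=\la\E Y=0$, whence $G_{X_\la}=1$, and the sub-Gaussian proxy bound \eqref{eq:subG,Y} gives $A_{X_\la}=\E e^{\la Y}\le e^{\la^2\si^2/2}=\mu_\la$, so that $A_{X_\la}/G_{X_\la}\le\mu_\la$. The event of interest rewrites as $\{Y\ge y\}=\{X_\la\ge e^{\la y}\}=\{X_\la/G_{X_\la}\ge v_y(\la)\}$.

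Next I would invoke part~\eqref{V} of Theorem~\ref{th:}, which allows the relaxation $A_X/G_X\le\mu$, together with \eqref{eq:right}, to conclude that
\begin{equation*}
\P(Y\ge y)\le p_{\mu_\la,v_y(\la)}
\end{equation*}
whenever $v_y(\la)>\mu_\la$, i.e.\ whenever $\la\in(0,2y/\si^2)$. Taking the infimum over such $\la$ yields \eqref{eq:P_opt}; for $\la\ge 2y/\si^2$ one has $v_y(\la)\in[1,\mu_\la]$, where by part~\eqref{III} of Theorem~\ref{th:} the quantity $p_{\mu_\la,v_y(\la)}$ is naturally read as $1$, so the infimum over that range contributes only the trivial bound $\P(Y\ge y)\le 1$ and therefore does not affect the stated infimum.

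The only mild obstacles are bookkeeping: checking that Theorem~\ref{th:} is legitimately applicable to $X_\la$ (which requires non-degeneracy of $X_\la$, automatic unless $Y\equiv 0$ a.s., in which case $\P(Y\ge y)=0$ for $y>0$ and the inequality is vacuous), and handling the boundary range $\la\ge 2y/\si^2$ just sketched. The substantive work has already been done by Theorem~\ref{th:}, so the proof is essentially a direct application of part~\eqref{V} followed by an infimum over $\la>0$; notably, no optimization in $\la$ needs to be carried out explicitly, which is precisely why \eqref{eq:P_opt} is stated in terms of an unevaluated infimum.
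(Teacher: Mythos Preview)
Your argument is correct and matches the paper's own approach: the paper treats Proposition~\ref{prop:subG1} as an immediate consequence of Theorem~\ref{th:} (parts~\eqref{I} and~\eqref{V}), applied to $X=e^{\la Y}$ with $\la>0$ kept free and then minimized, exactly as you describe. Your bookkeeping on the degeneracy case and on the range $\la\ge 2y/\si^2$ (where, consistently with Figure~\ref{fig:pic}, one takes $p_{\mu,v}=1$ for $v\in[1,\mu]$) is more explicit than the paper's, but otherwise the two arguments are identical.
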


The drawback of the optimal bound $P_\opt(y)$ is that its expression in \eqref{eq:P_opt} is implicit; also, in distinction with the simpler bounds $P_1(t)$ and $P_2(t)$, $P_\opt(y)=P_\opt(\si,y)$ will depend on $\si,y$ not only through the simple ratio $t=y/\si$. 

On the other hand, clearly we can use the simple bound $q_{\mu,v}$ in \eqref{eq:right,q} to immediately get the following: 

\begin{proposition}\label{prop:subG2}
For all real $y>0$, 
\begin{equation}\label{eq:P_3}
	\P(Y\ge y)\le %B_\opt(t):=
	P_3(t):=q_{\mu_t,v_t}=\min\Big(1,\frac{\mu_t-1}{v_t-1-\ln v_t}\Big) 
	=\min\Big(1,\frac{e^{t^2/2}-1}{e^{t^2}-1-t^2}\Big), 
\end{equation}
where $\mu_t$ and $v_t$ are as in \eqref{eq:mu_t,v_t}. 
\end{proposition}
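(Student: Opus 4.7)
The plan is to mirror the (implicit) derivation of Proposition~\ref{prop:subG} verbatim, substituting the simple bound $q_{\mu,v}$ from part~\eqref{IV} of Theorem~\ref{th:} in place of the exact bound $p_{\mu,v}$ from part~\eqref{I}. Fix $y>0$, set $\la:=\la_y=y/\si^2>0$, and introduce $X:=e^{\la Y}$ with threshold $e^{\la y}=v_t$. Since $\E Y=0$, one has $\E\ln X=\la\E Y=0$, whence $G_X=1$; and the sub-Gaussian bound \eqref{eq:subG,Y} yields $A_X=\E e^{\la Y}\le e^{\la^2\si^2/2}=\mu_t$, so that $A_X/G_X\le\mu_t$.

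Next, invoke part~\eqref{V} of Theorem~\ref{th:}, which allows replacing the equality $A_X/G_X=\mu$ by the inequality $A_X/G_X\le\mu$, and take $\mu:=\mu_t$. Because $v_t=e^{t^2}\ge 1$, the hypotheses of part~\eqref{IV} are satisfied, and so \eqref{eq:right,q} gives
\begin{equation*}
\P(Y\ge y)=\P\Big(\frac{X}{G_X}\ge v_t\Big)\le q_{\mu_t,v_t}.
\end{equation*}
Substituting $\mu_t=e^{t^2/2}$ and $v_t=e^{t^2}$ into the definition of $q_{\mu,v}$ in \eqref{eq:right,q} yields the explicit form $\min\bigl(1,(e^{t^2/2}-1)/(e^{t^2}-1-t^2)\bigr)$ of \eqref{eq:P_3}.

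There is essentially no obstacle: the only things to check are the eligibility conditions $v_t\ge 1$ (trivially true since $t\in\R$) and $\mu_t\le v_t$ (trivially true since $t^2/2\le t^2$), together with the fact that part~\eqref{V} legitimizes the replacement of $A_X/G_X=\mu_t$ by $A_X/G_X\le\mu_t$. Proposition~\ref{prop:subG2} is thus just the ``simple-bound'' analogue of Proposition~\ref{prop:subG}, obtained by swapping the tighter implicit bound $p_{\mu,v}$ for the weaker but wholly explicit bound $q_{\mu,v}$; no further analytic work is required.
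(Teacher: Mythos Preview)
Your proof is correct and follows exactly the approach the paper intends: the paper itself gives no separate proof but simply says ``clearly we can use the simple bound $q_{\mu,v}$ in \eqref{eq:right,q} to immediately get'' the result, and you have spelled out precisely that derivation (apply part~\eqref{IV} of Theorem~\ref{th:} with $X=e^{\la_y Y}$, $G_X=1$, and use part~\eqref{V} to handle $A_X/G_X\le\mu_t$). Nothing is missing.
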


We see that the bound $P_3(t)$ is quite explicit and almost as simple as the bound $P_1(t)=e^{-t^2/2}$ in \eqref{eq:subG-bound}. Moreover, a simple algebra shows that $P_3(t)<P_1(t)$ (for a real $t>0$) if and only if $1+t^2<e^{t^2/2}$, that is, if and only if 
$t>t_*:=\sqrt{-2 W_{-1}\big(-1/(2 \sqrt e\,)\big)-1}=1.585\dots$, where, as in Proposition~\ref{prop:L}, $W_k$ denotes the $k$th branch of Lambert's $W$ function. Also, $P_1(t_*)=P_3(t_*)=0.284\dots$, which is substantially greater than commonly used values of the level of significance in statistical testing. So, the bound $P_3(t)$ is an improvement of the bound $P_1(t)$ for values of $t$ relevant in statistics. 

(Parts of) the graphs of the ratios of the bounds $P_2(t)$ in \eqref{eq:P_2}, $P_3(t)$ in \eqref{eq:P_3}, and $P_\opt(\si,\si t)$ in \eqref{eq:P_opt} with $\si=6$ to the baseline sub-Gaussian bound $P_1(t)$ in \eqref{eq:subG-bound} are shown 
in Figure~\ref{fig:subG}. 
\begin{figure}[h]
	\centering
		\includegraphics[width=.55\textwidth]{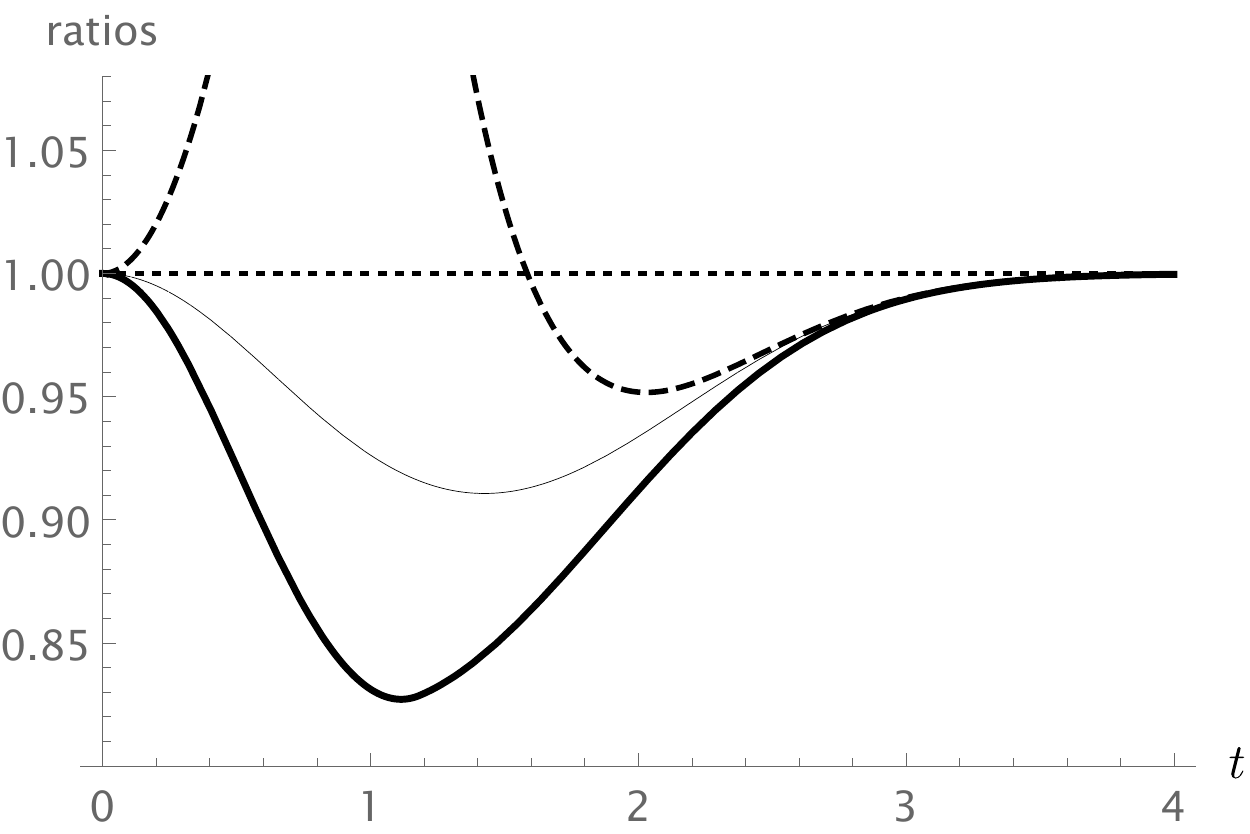}
	\caption{Graphs $\{(t,P_2(t)/P_1(t))\colon0<t<4\}$ (thin), 
	$\{(t,P_3(t)/P_1(t))\colon\break 
	0<t<4,P_3(t)/P_1(t)<1.08\}$(dashed), and 
	$\{(t,P_\opt(6,6t)/P_1(t))\colon0<t<4\}$ (thick).}
	\label{fig:subG}
\end{figure}

%%%-----
%%%
%%%
%%%%
%%%With $X$ and $v$ as in \eqref{eq:X,v}, and $\la=\la_y$, one can rewrite the inequality in \eqref{eq:subG-bound} in the following ``Markov'' form: 
%%%\begin{equation}\label{eq:markov,1}
%%%	\P(Y\ge y)=\P(X\ge v_t)\le B_\Markov(t):=\frac{\mu_t}{v_t},
%%%\end{equation}
%%%with 
%%%\begin{equation}
%%%v_t:=e^{\la_y y}=e^{y^2/\si^2}=e^{t^2}\quad\text{and}\quad
%%%\mu_t:=e^{\la_y^2\si^2/2}=e^{y^2/(2\si^2)}=e^{t^2/2}; 	
%%%\end{equation}
%%%then, in view of \eqref{eq:EY=0} and \eqref{eq:subG,Y}, $\mu$ is an upper bound on $\E(X/G_X)=\E X=\E e^{\la_y Y}$. So, by part~(I) of 
%%%Theorem~\ref{th:}, 
%%%\begin{equation}
%%%	\P(Y\ge y)=\P(X\ge v_t)\le B_\opt(t):=\frac{\mu_t-z_{\mu_t,v_t}}{v_t-z_{\mu_t,v_t}}
%%%	<\frac{\mu_t}{v_t}=B_\Markov(t). 
%%%\end{equation}

%Next, let  
%\begin{equation}
%	X:=e^{\la Y}\quad\text{and}\quad v:=e^{\la y}.
%\end{equation}
%Then the condition that 
%
%
%In this derivation of the bound $e^{-t^2/2}$ on $\P(Y\ge y)$, the condition that the $Y_i$'s are zero-mean was not taken into account. 

%\end{remark}

%\begin{remark}\label{rem:exp ineqs}
%!!! exp ineqs
%\end{remark}

%!!! approximations to the Lambert function

\subsection{Improvements of the Bennett--Hoeffding exponential bound}\label{BH}
It is seen from Figure~\ref{fig:subG} that the new bounds $P_2$ and $P_3$, and even the optimal bound $P_\opt$, provide only relatively limited improvements over the baseline sub-Gaussian bound $P_1$. 

In this subsection, it will be shown that the corresponding improvements over the well-known and widely used Bennett--Hoeffding exponential bound can be arbitrarily large (in a relative sense) in certain settings. 

Here it is still assumed that \eqref{eq:Y} holds, with independent zero-mean r.v.'s $Y_1,\dots,Y_n$. However, instead of the sub-Gaussian condition \eqref{eq:subG}, we now assume that 
\begin{equation*}
	Y_i\le b
\end{equation*}
for some real $b>0$ and all $i\in[n]$. We will also use notation \eqref{si^2}, but now with 
\begin{equation*}
	\si_i^2:=\Var Y_i=\E Y_i^2,
\end{equation*}
rather with $\si_i^2$ being a sub-Gaussian proxy variance of $Y_i$. 

It follows e.g.\ from \cite[Theorem~2]{pin-utev-exp} that, under the above conditions on \break  $Y,Y_1,\dots,Y_n$, the best possible upper bound on $\E e^{\la Y}$ is given by the inequality 
\begin{equation*}
	\E e^{\la Y}\le\mu_{\si,b}(\la):=\exp\Big\{\frac{\si^2}{b^2}(e^{\la b}-1-\la b)\Big\},
\end{equation*}
for each real $\la\ge0$. Thus, we have the Markov bound on $\P(Y\ge y)$: 
\begin{equation*}
	\P(Y\ge y)\le\frac{\mu_{\si,b}(\la)}{v_y(\la)},
\end{equation*}
where $v_y(\la)=e^{\la y}$, as in \eqref{eq:mu_la,v_la}. 
%The latter attains its minimum 
Minimizing the latter bound on $\P(Y\ge y)$ in $\la\ge0$, we get 
\begin{equation}\label{eq:BH}
	\P(Y\ge y)\le P_\BH(y,\si,b):=\frac{\mu_{\si,b}(\la_{y,\si,b})}{v_y(\la_{y,\si,b})},
\end{equation}
where 
\begin{equation*}
	\la_{y,\si,b}:=\frac1b\,\ln\Big(1+\frac{by}{\si^2}\Big), 
\end{equation*}
so that
\begin{equation}\label{eq:mu_BH}
	\mu_{\si,b}(\la_{y,\si,b})
	=\exp\Big\{\frac yb\Big[1-\frac{\si^2}{by}\,\ln\Big(1+\frac{by}{\si^2}\Big)\Big]\Big\}
\end{equation}
and 
\begin{equation}\label{eq:v_BH}
	v_y(\la_{y,\si,b})=\exp\Big\{\frac yb\,\ln\Big(1+\frac{by}{\si^2}\Big)\Big\}. 
\end{equation}
The bound $P_\BH(y,\si,b)$ on $\P(Y\ge y)$ in \eqref{eq:BH} is the famous and widely used Bennett~\cite{bennett}--Hoeffding~\cite{hoeff63} bound. 

Since the Bennett--Hoeffding (BH) bound is a species of the Markov bound, it can be improved using Theorem~\ref{th:}, just as the sub-Gaussian bound was improved in Propositions~\ref{prop:subG}, \ref{prop:subG1}, and \ref{prop:subG2} of Subsection~\ref{subG}. 
Here we will only consider the simplest of such improvements of the BH bound, based on \eqref{eq:right,q} (cf.\ \eqref{eq:P_3}), even though this improvement is not the best possible: 
\begin{proposition}\label{prop:BH}
For all real $y>0$, 
\begin{equation}\label{eq:BHimprov}
	\P(Y\ge y)\le %B_\opt(t):=
	P_{\BH,1}(y,\si,b):=\min\Big(1,\frac{\mu_{\si,b}(\la_{y,\si,b})-1}{v_y(\la_{y,\si,b})-1-\ln v_y(\la_{y,\si,b})}\Big),
%	P_3(t):=q_{\mu_t,v_t}=\min\Big(1,\frac{\mu_t-1}{v_t-1-\ln v_t}\Big) 
%	=\min\Big(1,\frac{e^{t^2/2}-1}{e^{t^2}-1-t^2}\Big), 
\end{equation}
where $\mu_{\si,b}(\la_{y,\si,b})$ and $v_y(\la_{y,\si,b})$ are as in \eqref{eq:mu_BH} and \eqref{eq:v_BH}. 
\end{proposition}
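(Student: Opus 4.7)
The plan is to apply Theorem~\ref{th:}\eqref{IV} (combined with part~\eqref{V}) to a suitable exponential transform of $Y$, in direct analogy with how the Bennett--Hoeffding bound \eqref{eq:BH} arises from the ordinary Markov inequality. The underlying observation, already exploited in Section~\ref{markov,Be-Ch}, is that the Bernstein--Chernoff step is nothing but Markov's inequality applied to $X := e^{\la Y}$; since $\E\ln X = \la\,\E Y = 0$, we automatically have $G_X = 1$, which was never used in the derivation of \eqref{eq:BH}. Theorem~\ref{th:}\eqref{IV} lets us cash in on this free piece of information through the closed-form $q_v$ bound.

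Concretely, fixing a real $y>0$, I would set $\la := \la_{y,\si,b}>0$, $X := e^{\la Y}$, and $v := v_y(\la) = e^{\la y}$. Then $\E\ln X = \la\,\E Y = 0$, so $G_X = 1$, while the Bennett--Hoeffding moment generating function estimate $\E e^{\la Y}\le\mu_{\si,b}(\la)$ yields $A_X/G_X \le \mu_{\si,b}(\la_{y,\si,b}) =: \mu$. Since $\la, y>0$ and $e^x > 1+x$ for $x>0$, both $v > 1$ and $\mu > 1$. Observing that $\{Y\ge y\} = \{X\ge v\} = \{X/G_X \ge v\}$, Theorem~\ref{th:}\eqref{IV}, together with the relaxation in part~\eqref{V} from $A_X/G_X=\mu$ to $A_X/G_X\le\mu$, would then deliver
\[
 \P(Y\ge y) \;=\; \P\Bigl(\frac{X}{G_X}\ge v\Bigr) \;\le\; q_{\mu,v} \;=\; \min\Bigl(1,\frac{\mu-1}{v-1-\ln v}\Bigr),
\]
which is exactly $P_{\BH,1}(y,\si,b)$ once $\mu$ and $v$ are unwound via \eqref{eq:mu_BH}--\eqref{eq:v_BH}.

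The only subtlety to address is the non-degeneracy hypothesis $\P(X=c)<1$ of Theorem~\ref{th:}. If $Y$ is a.s.\ constant then, since $\E Y=0$, it must vanish a.s., so $\P(Y\ge y)=0$ for $y>0$ and the claimed inequality is trivial; otherwise $X=e^{\la Y}$ is non-degenerate and Theorem~\ref{th:} applies verbatim. The main obstacle is therefore essentially notational bookkeeping rather than any genuinely difficult estimate: all that is needed is to match the $(\mu,v)$ arising from the substitution with the explicit formulas in \eqref{eq:mu_BH}--\eqref{eq:v_BH} for the Bennett--Hoeffding minimizer $\la_{y,\si,b}$.
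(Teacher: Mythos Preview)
Your argument is correct and matches the paper's approach exactly: the paper treats Proposition~\ref{prop:BH} as an immediate consequence of Theorem~\ref{th:}\eqref{IV}--\eqref{V} applied with $X=e^{\la_{y,\si,b}Y}$ and $v=v_y(\la_{y,\si,b})$, just as in the sub-Gaussian case of Proposition~\ref{prop:subG2}. Your handling of the degeneracy hypothesis and the verification that $\mu,v>1$ are the only details one would add, and they are fine.
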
 

Suppose now that 
\begin{equation*}
	\frac yb=c\quad\text{and}\quad \frac{\si^2}{by}\le e^{-C/c},
\end{equation*}
where $c$ and $C$ are positive real numbers. Then $\mu_{\si,b}(\la_{y,\si,b})
\le\exp\big\{\frac yb\big\}=e^c$ and $v_y(\la_{y,\si,b})\ge e^C$, so that 
\begin{equation*}
	\frac{P_{\BH,1}(y,\si,b)}{P_\BH(y,\si,b)}\le\frac{e^c-1}{e^c}\frac{e^C}{e^C-1-C}, 
\end{equation*}
because $\frac{\mu-1}\mu$ is increasing in $\mu>0$ and $\frac v{v-1-\ln v}$ is decreasing in $v>1$. 
So,
the ratio $P_{\BH,1}(y,\si,b)/P_\BH(y,\si,b)$ of the improved BH bound $P_{\BH,1}(y,\si,b)$ to the original BH bound $P_\BH(y,\si,b)$ can be however small if $c$ is small enough and $C$ is bounded away from $0$. Conditions with $C$ not small and $c$ not large arise in settings when possibly heavy tails of the distributions of the $Y_i's$ must be appropriately truncated -- see e.g.\ \cite{heyde67,pin81}. 

%\begin{equation}\label{eq:BHimprov}
%	P_{\BH,1}(y,\si,b)\le\frac{c-1}{e^C-1-C}\Big),
%\end{equation}
%
%It follows from \eqref{eq:BHimprov} that 
%
%\cite{heyde67,pin81}

\bigskip
\hrule\medskip

The results of Subsections~\ref{subG} and \ref{BH} can be extended to the case when the $Y_i's$ are (super)martingale-differences; cf.\ e.g.\ \cite[Section~8]{pin94}. 

\section{Proofs}\label{proofs}

\begin{proof}[Proof of Theorem~\ref{th:}] This proof is implicitly based on a duality argument; see e.g.\ \cite{kemper-dual,pin98}. 

Note that the probabilities $\P\big(\frac X{G_X}\ge v\big)$ and $\P\big(\frac X{G_X}\le v\big)$ will not change if we replace there $X$ by $X/u$, for any positive real $u$. So, without loss of generality we may and shall assume that $G_X=1$, that is, 
\begin{equation}\label{eq:G_X=1}
	\E\ln X=0, 
\end{equation}
so that the probabilities $\P\big(\frac X{G_X}\ge v\big)$ and $\P\big(\frac X{G_X}\le v\big)$ become simply $\P(X\ge v)$ and $\P(X\le v)$. 
 
Take now any positive real $v$ and any positive real $z\ne v$, and for all real $x>0$ let  
\begin{equation*}
	g(x):=ax-b\ln x+c,
\end{equation*}
where 
\begin{equation*}
	a:=a(z):=\frac{1/v}{h(r)},\quad b:=b(z):=az,\quad c:=c(z):=az\ln\frac ze, 
\end{equation*}
\begin{equation*}
	h(r):=1-r+r\ln r,\quad r:=z/v. %\in(0,1). 
\end{equation*}
Note that the function $h$ is convex on $(0,\infty)$, with $h(1)=0=h'(1)$. So, $h>0$ on $(0,\infty)\setminus\{1\}$ and hence $a>0$ and $b>0$. Therefore, the function $g$ is convex on $(0,\infty)$. Moreover, 
\begin{equation*}
	g(z)=g'(z)=0 \quad\text{and}\quad g(v)=1. 
\end{equation*}

So, if $0<z<v$, then $g(x)\ge\ii{x\ge v}$ for all real $x>0$, where $\ii\cdot$ denotes the indicator. Hence, in view of \eqref{eq:G_X=1},  
\begin{equation}\label{eq:P(X>v)<R}
\begin{aligned}
	\P(X\ge v)&\le\E g(X)=a\E X-b\E\ln X+c=a\,\mu+c \\ 
	&=R_z(v):=\frac{\mu-z+z\ln z}{v-z+z\ln z-z\ln v}\quad\text{if}\quad 0<z<v. 
\end{aligned}
\end{equation}
Similarly, if $0<v<z$, then $g(x)\ge\ii{x\le v}$ for all real $x>0$, whence 
\begin{equation}\label{eq:P(X<v)<R}
	\P(X\le v)\le R_z(v)\quad\text{if}\quad 0<v<z. 
\end{equation}
Recalling the conditions $\mu>1$ in \eqref{eq:EX,ElnX} and $q_1=1$ in the statement of part~\eqref{IV} of Theorem~\ref{th:}, as well as the fact that no probability can exceed $1$, and then substituting $1$ for $z$ in \eqref{eq:P(X>v)<R} and \eqref{eq:P(X<v)<R}, we get part~\eqref{IV} of Theorem~\ref{th:}. 

To prove part~\eqref{I} of Theorem~\ref{th:}, consider separately the two cases: $v\in(\mu,\infty)$ and $v\in(0,1)$. 

If $v\in(\mu,\infty)$, then the function $F\colon(0,\infty)\to\R$ is concave, with $F(0+)=-\infty<0$ and $F(1)=(\mu-1)\ln v>0$ (since $v>\mu>1$). So, indeed there is exactly one root $z=z_v\in(0,1)$ of equation~\eqref{eq:F}. 
Next, from the equality $F(z_v)=0$ we get $\ln z_v=(\mu - z_v)\ln(v)/(\mu - v)$. Substituting this expression for $\ln z_v$ into the expression for $R_z(v)$ in \eqref{eq:P(X>v)<R} and recalling the definition of $p_v$ in \eqref{eq:right}, we get 
\begin{equation}\label{eq:R=p}
	R_{z_v}(v)=p_v. 
\end{equation}
Therefore and because here 
\begin{equation}\label{eq:bet,right}
0<z_v<1<\mu<v, 	
\end{equation}
we see that the inequality in \eqref{eq:right} follows by \eqref{eq:P(X>v)<R}. 

The case $v\in(0,1)$ is similar (to the case $v\in(\mu,\infty)$). Indeed, if $v\in(0,1)$, then the function $F\colon(0,\infty)\to\R$ is convex, with $F(\mu)=(v-\mu)\ln\mu<0$ (since $\mu>1$) and $F(\infty-)=\infty>0$ (since $v>\mu>1$). So, indeed there is exactly one root $z=z_v\in(\mu,\infty)$ of equation~\eqref{eq:F}. Of course, equality \eqref{eq:R=p} holds for $v\in(0,1)$ as well. Therefore and because here 
\begin{equation}\label{eq:bet,left}
0<v<1<\mu<z_v, 	
\end{equation}
we see that the inequality in \eqref{eq:left} follows by \eqref{eq:P(X<v)<R}.

Also, in view of \eqref{eq:bet,right} and \eqref{eq:bet,left}, in either one of the cases $v\in(\mu,\infty)$ and $v\in(0,1)$, $\mu$ is strictly between $v$ and $z_v$, whence $p_v=\dfrac{\mu-z_v}{v-z_v}\in(0,1)$. 

Thus, part~\eqref{I} of Theorem~\ref{th:} is proved. 

To prove part~\eqref{II} of Theorem~\ref{th:}, note first that, in view of the just proved inclusion $p_v\in(0,1)$, there does exist a r.v.\ $X$ as in \eqref{eq:X_v}. For such a r.v.\ $X$, we have $A_X=\E X=\mu$ and $\ln G_X=\E\ln X=\dfrac{F(z_v)}{v-z_v}=0$, by the definition of $z_v$, so that the condition $A_X/G_X=\mu$ holds. Also, again in view of \eqref{eq:bet,right} and \eqref{eq:bet,left}, we have $z_v<v$ if $v\in(\mu,\infty)$, and $z_v>v$ if $v\in(0,1)$. So, for 
any r.v.\ $X$ as in \eqref{eq:X_v}, the inequalities in \eqref{eq:right} and \eqref{eq:left} turn into the equalities; that is, the upper bound $p_v$ in the inequalities in \eqref{eq:right} and \eqref{eq:left} is exact, as it is attained for $X$ as in \eqref{eq:X_v}. This proves part~\eqref{II} of Theorem~\ref{th:}. 

Next, consider part~\eqref{III} of Theorem~\ref{th:}. Note that the function $\rho_\mu$ is nonincreasing on $\R$ and $\rho_\mu\le1$ on $\R$. Also, by part~\eqref{II} of Theorem~\ref{th:} and the definition of $p_v$ in \eqref{eq:right}, for $v\in(\mu,\infty)$ we have $\rho_\mu(v)=p_v\to1$ as $v\downarrow\mu$, because $\mu>1$ and $z_v<1$. So, $\rho_\mu(\mu+)=1$ and hence $1\ge\rho_\mu(v)\ge\rho_\mu(\mu+)=1$ for all $v\in(-\infty,\mu]$. This proves \eqref{eq:rho=1}. 

Further, the function $\la_\mu$ is nondecreasing on $\R$ and $\la_\mu\le1$ on $\R$. Also, by part~\eqref{II} of Theorem~\ref{th:}, % and the definition of $p_v$ in \eqref{eq:right}
for $v\in(0,1)$ we have $\la_\mu(v)=p_v$. Let now $v\uparrow1$. Then $|(v-\mu)\ln z_v|$ is bounded away from $0$, because $\mu>1$ and $z_v>\mu$. So, in view of \eqref{eq:F} and the condition $F(z_v)=0$, $|(\mu-z_v)\ln v|$ is bounded away from $0$. So, for $v\uparrow1$ we have $z_v\to\infty$ and hence $\la_\mu(v)=p_v\to1$, again by the definition of $p_v$ in \eqref{eq:right}. Therefore, $\la_\mu(1-)=1$ and hence $1\ge\la_\mu(v)\ge\la_\mu(1-)=1$ for all $v\in[1,\infty)$. This proves \eqref{eq:la=1}. 

Concerning the last, non-attainment clause in part~\eqref{III} of Theorem~\ref{th:}: 
If $\P\big(\frac X{G_X}\ge v\big)=1$ for some $v\in[1,\mu]$, then $\P\big(\frac X{G_X}\ge1\big)=1$, which implies that $\P(X=G_X)=1$, which contradicts the inequality in \eqref{eq:EX,ElnX}. Similarly, if $\P\big(\frac X{G_X}\le v\big)$ for some $v\in[1,\mu]$, then $\P\big(\frac X{G_X}\le\mu\big)=1$, which implies that $\frac{A_X}{G_X}=\E\frac X{G_X}\le\mu$, with the strict inequality $\frac{A_X}{G_X}<\mu$ (contradicting the definition of $\mu$ in \eqref{eq:EX,ElnX}) unless $\P\big(\frac X{G_X}=\mu\big)=1$. But the latter equality implies $\P(X=c)=1$ for some real $c>0$, which contradicts the inequality in \eqref{eq:EX,ElnX} (since the function $\ln$ is strictly concave).  

Thus, for each $v\in[1,\mu]$, the exact upper bound, $1$, on either one of the two tail probabilities, $\P\big(\frac X{G_X}\ge v\big)$ and $\P\big(\frac X{G_X}\le v\big)$, is not attained.

Finally, concerning part~\eqref{V} of Theorem~\ref{th:}: 
%The condition $\mu:=A_X/G_X$ in %Theorem~\ref{th:}
%\eqref{eq:EX,ElnX} can be replaced by the $A_X/G_X\le\mu$. 
%This 
%follows from the proof of Theorem~\ref{th:} (in Section~\ref{proofs}). More specifically, 
Given only the condition $A_X/G_X\le\mu$ (which means that $\E X\le\mu$ when \eqref{eq:G_X=1} is assumed), the second equality sign in \eqref{eq:P(X>v)<R} can be replaced by $\le$, since $a>0$. So, the inequality $\P(X\ge v)\le R_z(v)$ will continue to hold when $0<z<v$. Similarly, \eqref{eq:P(X<v)<R} will continue to hold. %, under the assumption $A_X/G_X\le\mu$. 

Theorem~\ref{th:} is now completely proved.
\end{proof}

\begin{proof}[Proof of Proposition~\ref{prop:p,q}]
For brevity, let 
\begin{equation*}
	p:=p_v,\quad q:=q_v,\quad z:=z_v,
\end{equation*}
and then 
\begin{equation*}
	\de_\mu:=\mu-1\downarrow0,\quad\de_v:=v-1\to0,\quad\de_z:=z-1, 
\end{equation*}
so that, by \eqref{eq:F}, \eqref{eq:right}, \eqref{eq:right,q}, and the condition $q_v<1$, 
\begin{equation}\label{eq:F,1}
	(\de_v-\de_\mu)\ln(1+\de_z)+(\de_\mu-\de_z)\ln(1+\de_v)=0, 
\end{equation}
\begin{equation}\label{eq:p=}
	p=\frac{\de_\mu-\de_z}{\de_v-\de_z}
\end{equation}
and 
\begin{equation*}%\label{eq:right,q}
	q=\frac{\de_\mu}{\de_v-\ln(1+\de_v)}\sim\frac{\de_\mu}{\de_v^2/2},  
\end{equation*}
whence  
\begin{equation}\label{eq:de_mu sim}
	\de_\mu\sim q\de_v^2/2=o(\de_v). 
\end{equation}
Therefore, $\de_v-\de_\mu\sim\de_v\sim\ln(1+\de_v)$ and hence \eqref{eq:F,1} implies $\ln(1+\de_z)\sim\de_z-\de_\mu$. Since $\de_\mu\to0$, it follows that $\de_z\to0$. (Otherwise, without loss of generality we have
$\ln(1+\de_z)\sim\de_z$, which implies $\de_z\to0$, since $\ln(1+u)<u$ for all $u\in(-1,\infty)\setminus\{0\}$ and $\ln(1+u)\not\sim u$ as $u\downarrow-1$ or $u\to\infty$.) 
% without loss of generality we have $\de_z\to c$ for some $c\in[-1,\infty]\setminus\{0\}$

Now \eqref{eq:F,1} and \eqref{eq:de_mu sim} yield   
\begin{equation}\label{eq:F,2}
	\Big(\de_v-\frac{q\de_v^2}{2+o(1)}\Big)\Big(\de_z-\frac{\de_z^2}{2+o(1)}\Big)
	+\Big(\frac{q\de_v^2}{2+o(1)}-\de_z\Big)\Big(\de_v-\frac{\de_v^2}{2+o(1)}\Big)=0, 
\end{equation}
which simplies to
\begin{equation}\label{eq:F,3}
	\Big(1+\frac{(1-q)\de_v}{2+o(1)}\Big)\Big(\de_z-\frac{\de_z^2}{2+o(1)}\Big)
	+\Big(\frac{q\de_v^2}{2+o(1)}-\de_z\Big)=0  
\end{equation}
and then to 
\begin{equation}\label{eq:qe}
	\de_z^2-(1-q)(1+o(1))\de_v\de_z-q(1+o(1))\de_z^2=0. 
\end{equation}
Also, by part~\eqref{I} of Theorem~\ref{th:}, $\de_z\de_v<0$. 
So, ``solving'' the ``quadratic'' equation \eqref{eq:qe}, we get 
\begin{align*}
	\frac{\de_z}{\de_v}&=\frac{(1-q)(1+o(1))-\sqrt{(1-q)^2(1+o(1))+4q(1+o(1))}}2 \\ 
	&=\frac{(1-q)(1+o(1))-(1+q)(1+o(1))}2=-q+o(1).  
\end{align*}
%
%\begin{align*}
%	\frac{\de_z}{\de_v}&=\frac{(1-q)(1+o(1))}2-\sqrt{\frac{(1-q)^2(1+o(1))}4+q(1+o(1))} \\ 
%	&=\frac{(1-q)(1+o(1))}2-\frac{(1+q)(1+o(1))}2=-q+o(1).  
%\end{align*}
%
Now \eqref{eq:p,q} follows by \eqref{eq:p=} and \eqref{eq:de_mu sim}.  
\end{proof}

The proof of Proposition~\ref{prop:L} is based in part on the following lemmas. 
\begin{lemma}\label{lem:1} 
We have $\tz_v\in(0,\infty)$ for all $v\in(\mu,\infty)\cup(0,1)$. Also, 
\begin{equation}\label{eq:lem1}
\tz_v
	\begin{cases}
	<v&\text{ if }v\in(\mu,\infty), \\ 
	>v&\text{ if }v\in(0,1). 
	\end{cases}
\end{equation}
\end{lemma}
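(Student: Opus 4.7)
The plan is to reduce both claims to a single elementary inequality.

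First, I observe that positivity of $\tz_v=(v-\mu)/\ln v$ is immediate on inspection of signs. If $v\in(\mu,\infty)$, then $v>\mu>1$, so numerator and denominator are both positive; if $v\in(0,1)$, then $v-\mu<0$ (since $\mu>1$) and $\ln v<0$, so again the ratio lies in $(0,\infty)$.

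Next, I would unify the two bounds in \eqref{eq:lem1} into a single assertion. For $v\in(\mu,\infty)$, since $\ln v>0$, the claim $\tz_v<v$ is equivalent to $v-\mu<v\ln v$. For $v\in(0,1)$, since $\ln v<0$, multiplying through reverses the inequality and the claim $\tz_v>v$ becomes the same statement $v-\mu<v\ln v$. So in both cases it suffices to establish
\begin{equation*}
v\ln v - v + \mu > 0 \quad\text{for } v\in(\mu,\infty)\cup(0,1).
\end{equation*}

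The key step is the standard inequality $v\ln v - v + 1\ge 0$ for all $v>0$, with equality only at $v=1$. This follows because $\psi(v):=v\ln v - v+1$ satisfies $\psi'(v)=\ln v$, so $\psi$ attains its unique minimum $\psi(1)=0$ at $v=1$. Since $v\neq 1$ in the range under consideration, $\psi(v)>0$; adding $\mu-1>0$ (guaranteed by \eqref{eq:EX,ElnX}) yields $v\ln v - v + \mu>0$ as required.

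There is no real obstacle here: the only subtlety is to remember to flip the inequality when $\ln v<0$ in the case $v\in(0,1)$, after which both cases collapse to the same convexity-type estimate for $\psi$.
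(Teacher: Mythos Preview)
Your proof is correct and follows essentially the same route as the paper: both reduce the two cases to the single inequality $l(v):=v\ln v - v + \mu>0$ on $(0,1)\cup(1,\infty)$ and verify it by noting that $l'(v)=\ln v$ vanishes only at $v=1$, where $l(1)=\mu-1>0$. The paper phrases this via convexity of $l$ with $l'(1)=0$, whereas you split off $\psi(v)=v\ln v-v+1\ge0$ and add $\mu-1$, but the argument is the same.
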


\begin{lemma}\label{lem:2} %\reverse-reverse-jensen\Mathematica\Untitled-3.nb
For $v\in(\mu,\infty)\cup(0,1)$, 
\begin{equation}\label{eq:H>0}
	%H(v):=
	H_\mu(v):=\frac{G_\mu(v)}{v-1}>0,
\end{equation}
where 
\begin{equation*}
	%G(v):=
	G_\mu(v):=F(\tz_v)
	=\mu\ln v+(v-\mu)\ln\frac{v-\mu}{\ln v}+\mu-v, 
\end{equation*}
with $\tz_v$ as in \eqref{eq:tz_v}. 
\end{lemma}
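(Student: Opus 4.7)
The plan is to exploit a single identity that trivializes the problem: regarded as a function of $z$, $F$ always vanishes at $z=v$. Indeed, directly from the definition in \eqref{eq:F},
\[
F(v) = (v-\mu)\ln v + (\mu-v)\ln v = 0.
\]
Consequently $G_\mu(v) = F(\tz_v) - F(v)$, and the question of the sign of $G_\mu(v)$ becomes a question of comparing $F$ at $\tz_v$ and at $v$.

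Next I would observe that $\tz_v$ is in fact the (unique) critical point of $z\mapsto F(z)$ on $(0,\infty)$, since
\[
F'(z) = \frac{v-\mu}{z} - \ln v
\]
vanishes precisely at $z=(v-\mu)/\ln v=\tz_v$ (well-defined because $v\ne 1$ on the range considered). Moreover $F''(z) = -(v-\mu)/z^2$ has a constant sign, so $F$ is \emph{strictly} concave on $(0,\infty)$ when $v>\mu$ and \emph{strictly} convex when $v<\mu$. Hence $\tz_v$ is the unique strict maximizer (resp.\ minimizer) of $F$ in the respective cases.

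Then I would split on the two ranges of $v$ and invoke Lemma~\ref{lem:1} to rule out the degenerate equality $\tz_v=v$:
\begin{enumerate}[(a)]
\item If $v\in(\mu,\infty)$, then $F$ is strictly concave and, by Lemma~\ref{lem:1}, $\tz_v<v$, so $\tz_v\ne v$. Strict concavity gives $G_\mu(v)=F(\tz_v)>F(v)=0$. Since $v>\mu>1$, the denominator $v-1$ in \eqref{eq:H>0} is positive, yielding $H_\mu(v)>0$.
\item If $v\in(0,1)$, then $F$ is strictly convex and, by Lemma~\ref{lem:1}, $\tz_v>v$, so again $\tz_v\ne v$. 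Strict convexity gives $G_\mu(v)=F(\tz_v)<F(v)=0$. Since $v<1$, the denominator $v-1$ is negative, again yielding $H_\mu(v)>0$.
\end{enumerate}

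The only place where real content is needed is the identity $F(v)\equiv 0$ together with the fact that $\tz_v$ is the unique critical point of $F(\cdot)$; once these are in hand, the proof is a one-line convexity/concavity comparison, and the role of Lemma~\ref{lem:1} is exactly to supply the strict separation $\tz_v\ne v$ that upgrades the comparison from weak to strict. No case-by-case calculation with logarithms or Lambert's $W$ is required.
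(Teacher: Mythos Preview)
Your proof is correct and is genuinely simpler than the paper's. The key insight you use---that $F(v)=0$ and that $\tz_v$ is exactly the extremizer of $z\mapsto F(z)$ on $(0,\infty)$---collapses the problem to a one-line strict concavity/convexity comparison, with Lemma~\ref{lem:1} supplying only the separation $\tz_v\ne v$ needed for strictness.

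By contrast, the paper proceeds indirectly: it first computes $\partial G_\mu(v)/\partial\mu=\ln v-\ln\tz_v$ and, via Lemma~\ref{lem:1}, reduces the claim to the boundary case $\mu=1$; it then analyzes the explicit function
\[
H_1(v)=\frac{\ln v}{v-1}+\ln\frac{v-1}{\ln v}-1
\]
by differentiating, determining the sign of $H_1'$, and using $H_1(1\pm)=0$. Your route avoids both the reduction step and the explicit derivative calculation entirely. The paper's approach does yield a closed form for $H_1$ and its derivative, but this is not reused elsewhere, so nothing is lost by your shortcut. In effect, the paper treats $G_\mu(v)$ as an opaque expression in $\mu$ and $v$, while you remember that it is $F$ evaluated at its own critical point, which is the structural fact that makes the sign transparent.
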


\begin{lemma}\label{lem:3} 
We have $\tz_v\in(0,\infty)$ for all $v\in(\mu,\infty)\cup(0,1)$. Also, 
\begin{equation}\label{eq:lem3}
z_v
	\begin{cases}
	<\tz_v&\text{ if }v\in(\mu,\infty), \\ 
	>\tz_v&\text{ if }v\in(0,1). 
	\end{cases}
\end{equation}
\end{lemma}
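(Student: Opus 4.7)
The plan is to exploit a simple algebraic identity that the authors do not explicitly use in Lemmas~\ref{lem:1}--\ref{lem:2}: by direct computation,
\[
F(v)=(v-\mu)\ln v+(\mu-v)\ln v=0,
\]
so $v$ itself is always a root of the equation $F(z)=0$. Combined with the unique root $z_v$ provided by part~\eqref{I} of Theorem~\ref{th:}, this gives a complete list of two roots of $F$ on $(0,\infty)$, since $F$ is either strictly concave or strictly convex (depending on the sign of $v-\mu$) and in either case has at most two zeros. The first claim of Lemma~\ref{lem:3}, that $\tz_v\in(0,\infty)$, has already been established in Lemma~\ref{lem:1}, so only the sign of $\tz_v-z_v$ requires proof.

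For $v\in(\mu,\infty)$, the coefficient $v-\mu>0$ in front of $\ln z$ makes $F$ strictly concave on $(0,\infty)$. Its two roots are $z_v\in(0,1)$ and $v$, so concavity forces $F>0$ on $(z_v,v)$ and $F<0$ elsewhere on $(0,\infty)$. Lemma~\ref{lem:2} gives $F(\tz_v)=G_\mu(v)=(v-1)H_\mu(v)$, which is strictly positive because $v>\mu>1$ and $H_\mu(v)>0$. Hence $\tz_v$ must lie in the open interval $(z_v,v)$, and in particular $z_v<\tz_v$, as required.

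For $v\in(0,1)$, the coefficient $v-\mu<0$ makes $F$ strictly convex on $(0,\infty)$. The two roots are $v$ and $z_v\in(\mu,\infty)$, so convexity forces $F<0$ on $(v,z_v)$ and $F>0$ elsewhere on $(0,\infty)$. By Lemma~\ref{lem:2}, $F(\tz_v)=(v-1)H_\mu(v)<0$, since now $v-1<0$ while $H_\mu(v)>0$. Therefore $\tz_v\in(v,z_v)$, which yields $\tz_v<z_v$ and completes the proof.

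I do not foresee any genuine obstacle here: the whole argument rests on the observation $F(v)=0$, on the concavity/convexity dichotomy for $F$ determined by the sign of $v-\mu$, and on the sign information for $F(\tz_v)$ supplied by Lemma~\ref{lem:2}. The mild subtlety is just to keep straight which interval the ``middle root region'' is, in each of the two cases, and to verify that $v$ and $z_v$ really do exhaust the roots of $F$ on $(0,\infty)$, which follows from the strict concavity (resp.\ convexity) of $F$.
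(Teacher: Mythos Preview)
Your argument is correct and is essentially the paper's own proof: the authors also observe $F(v)=0$, use the strict concavity (resp.\ convexity) of $F$ together with Lemma~\ref{lem:2}'s sign information on $F(\tz_v)$ to conclude that $\tz_v$ lies strictly between $v$ and $z_v$. The only cosmetic difference is that the paper then invokes Lemma~\ref{lem:1} (comparing $\tz_v$ with $v$) to deduce the ordering of $\tz_v$ and $z_v$, whereas you instead use the known location of $z_v$ from part~\eqref{I} of Theorem~\ref{th:}; both routes yield the same conclusion.
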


\begin{proof}[Proof of Lemma~\ref{lem:1}] 
That $\tz_v\in(0,\infty)$ for all $v\in(\mu,\infty)\cup(0,1)$ follows immediately from the definition of $\tz_v$ in \eqref{eq:tz_v} and the condition $\mu>1$. 
Next, for each $v\in(\mu,\infty)\cup(0,1)$, each of the two inequalities in \eqref{eq:lem1} can be rewritten as $l(v)>0$, where 
\begin{equation*}
	l(v):=v\ln v-v+\mu. 
\end{equation*}
The function $l$ is convex on $(0,\infty)$, with $l(1)=\mu-1>0$ and $l'(1)=0$. So, $l(v)>0$ for $v\in(0,1)\cup(1,\infty)$ and hence for $v\in(\mu,\infty)\cup(0,1)$, which completes the proof of Lemma~\ref{lem:1}.  
%
%If $v\in(\mu,\infty)$, then $v>\mu>1$ and hence the condition $\tz_v<v$ cab be rewritten as $l(v)>0$, where 
%\begin{equation}
%	l(v):=v\ln v-v+\mu. 
%\end{equation}
%The function $l$ is convex on $(0,\infty)$, with $l(\mu)=\mu\ln\mu>0$ and $l'(\mu)=\ln\mu>0$. So, indeed $l(v)>0$ and hence $\tz_v<v$ if $v\in(\mu,\infty)$. 
%
%If $v\in(0,1)$, then the condition $\tz_v>v$ cab be rewritten as $l(v)<0$. Also, $l'(v)=\ln v<0$ for $v\in(0,1)$, so that $l$ is decreasing on $(0,1]$, with $l(1)=\mu-1>0$. 
%The function $l$ is convex on $(0,\infty)$, with $l(\mu)=\mu\ln\mu>0$ and $l'(\mu)=\ln\mu>0$. So, indeed $l(v)>0$ and hence $\tz_v<v$ if $v\in(\mu,\infty)$. 
% 
\end{proof}

\begin{proof}[Proof of Lemma~\ref{lem:2}] 
Note first that the partial derivative of $G_\mu(v)$ in $\mu$ is $\ln v-\ln\tz_v$. So, by Lemma~\ref{lem:1}, $G_\mu(v)$ is increasing in $\mu\in[1,v)$ if $v\in(1,\infty)$ and decreasing in $\mu\in[1,\infty)$ if $v\in(0,1)$. 
%So, if $v\in(0,1)$, then $G_\mu(v)<G_{v_+}(v)=v\ln v<0$ and therefore the inequality in \eqref{eq:H>0} holds. 
It follows that $G_\mu(v)>G_1(v)$ if $v\in(\mu,\infty)$ and $G_\mu(v)<G_1(v)$ if $v\in(0,1)$. 
%Also, for all $v\in(\mu,\infty)\cup(0,1)$, $v-\mu$ is of the same sign as $v-1$. 

%Suppose now that $v\in(\mu,\infty)$. Then $G_\mu(v)>G_1(v)$, since $G_\mu(v)$ is increasing in $\mu\in[1,v)$. 
So, to complete the proof of Lemma~\ref{lem:2}, it is enough to show that 
\begin{equation}\label{eq:H_1>0}
	H(v):=H_1(v)=\frac{\ln v}{v-1}+\ln\frac{v-1}{\ln v}-1\overset{\text(?)}>0\quad\text{ if }v\in(0,1)\cup(1,\infty). 
\end{equation}
We have %\reverse-reverse-jensen\Mathematica\Untitled-3.nb 
\begin{equation*}
	H'(v):=\frac{(v-1-\ln v) (v\ln v-v+1)}{(v-1)^2\, v \ln v}, 
\end{equation*}
which is easily seen to be of the same sign as $v-1$ for all $v\in(0,1)\cup(1,\infty)$. So, $H(v)$ is decreasing in $v\in(0,1)$ and 
increasing in $v\in(1,\infty)$. Also, $H(1+)=H(1-)=0$. Thus, \eqref{eq:H_1>0} is true, which completes the proof of Lemma~\ref{lem:2}. 
\end{proof}

\begin{proof}[Proof of Lemma~\ref{lem:3}] 
Consider first the case $v\in(\mu,\infty)$. Then, as was noted in the proof of part~\eqref{I} of Theorem~\ref{th:}, the function $F\colon(0,\infty)\to\R$ is concave. Also, $F(v)=0$ and, by the definition of $z_v$ in part~\eqref{I} of Theorem~\ref{th:}, $F(z_v)=0$. Further, by Lemma~\ref{lem:2}, $F(\tz_v)>0$. Therefore and in view of the concavity of $F$, $\tz_v$ is strictly between $v$ and $z_v$. But, by Lemma~\ref{lem:1}, here $\tz_v<v$. So, the first inequality in \eqref{eq:lem3} is proved. 

The second case, with $v\in(0,1)$, is treated similarly. In this case, the function $F\colon(0,\infty)\to\R$ is convex and by Lemma~\ref{lem:2}, $F(\tz_v)<0$. Here we still have  $F(v)=0$ and $F(z_v)=0$, whence again $\tz_v$ is strictly between $v$ and $z_v$. But, by Lemma~\ref{lem:1}, here $\tz_v>v$. So, the second inequality in \eqref{eq:lem3} is proved as well. 
\end{proof}

\begin{proof}[Proof of Proposition~\ref{prop:L}] 
Take any $v\in(\mu,\infty)\cup(0,1)$. 
By the definition of $z_v$ in part~\eqref{I} of Theorem~\ref{th:}, $F(z_v)=0$, that is, $(v-\mu)\ln z_v+(\mu-z_v)\ln v=0$. Dividing the latter equality by $v-\mu$ and recalling the definition of $\tz_v$ in \eqref{eq:tz_v}, rewrite the defining condition on $z_v$ as 
\begin{equation}\label{eq:L}
	\ln z_v-\frac{z_v}{\tz_v}=-\frac\mu{\tz_v}. 
\end{equation}
Exponentiating both sides of \eqref{eq:L} and then dividing the resulting expressions by $-\tz_v$, rewrite \eqref{eq:L} as 
\begin{equation*}
t_v e^{t_v}=u_v,	
\end{equation*}
where 
\begin{equation*}
	t_v:=-\frac{z_v}{\tz_v}\quad\text{and}\quad u_v:=-\frac{e^{-\mu/\tz_v}}{\tz_v}. 
\end{equation*}
Note also that $te^t\in(-1/e,0)$ for $t\in(-\infty,-1)\cup(-1,0)$. 
So, in view of the description of the branches $W_0$ and $W_{-1}$ of Lambert's $W$ function given at the end of the statement of Proposition~\ref{prop:L}, it remains to check that $t_v\in(-1,0)$ if $v\in(\mu,\infty)$ and $t_v\in(-\infty,-1)$ if $v\in(0,1)$; but these conditions on $t_v$ follow immediately by Lemma~\ref{lem:3}. Proposition~\ref{prop:L} is proved. 
\end{proof}

%% The Appendices part is started with the command \appendix;
%% appendix sections are then done as normal sections
%% \appendix

%% \section{}
%% \label{}

%% References
%%
%% Following citation commands can be used in the body text:
%% Usage of \cite is as follows:
%%   \cite{key}          ==>>  [#]
%%   \cite[chap. 2]{key} ==>>  [#, chap. 2]
%%   \citet{key}         ==>>  Author [#]

%% References with bibTeX database:

%%%\bibliographystyle{model1-num-names}
%%%\bibliography{<your-bib-database>}

\bibliographystyle{abbrv}
%\bibliographystyle{ims}
%\bibliography{are.citations}
%\bibliography{citat}

%\bibliography{citations}

\bibliography{P:/pCloudSync/mtu_pCloud_02-02-17/bib_files/citations01-09-20}
%\bibliography{C:/Users/Iosif/Dropbox/mtu/bib_files/citations12.13.12}
%\bibliography{C:/Users/Iosif/Documents/mtu_home01-30-10/bib_files/citations}
%\bibliography{C:/Users/Iosif/Documents/mtu_home12-22-08/bib_files/citations}

\end{document}